\theoremstyle{plain}
\newtheorem*{claim*}{Claim}
\newtheorem{thm}{Theorem}[section]
\newtheorem{corollary}[thm]{Corollary}
\newtheorem{lemma}[thm]{Lemma}
\newtheorem{prop}[thm]{Proposition}
\theoremstyle{definition}
\newtheorem{defn}[thm]{Definition}
\newtheorem{ex}[thm]{Example}
\newtheorem{remark}[thm]{Remark}
\newtheorem{con}[thm]{Construction}
\newtheorem{prob}[thm]{Open Problem}
\begin{document}
\subjclass[2010]{20M30, 20M05}
\title{\large{Generators and presentations for direct and wreath products of monoid acts}}
\author{Craig Miller}
\address{School of Mathematics and Statistics, St Andrews, Scotland, UK, KY16 NSS}
\email{cm380@st-andrews.ac.uk}

\begin{abstract}
We investigate the preservation of the properties of being finitely generated and finitely presented under both direct and wreath products of monoid acts.
A monoid $M$ is said to {\em preserve} property $\mathcal{P}$ in direct products if, for any two $M$-acts $A$ and $B$,
the direct product $A\times B$ has property $\mathcal{P}$ if and only if both $A$ and $B$ have property $\mathcal{P}$.
It is proved that the monoids $M$ that preserve finite generation (resp. finitely presentability) 
in direct products are precisely those for which the diagonal $M$-act $M\times M$ is finitely generated (resp. finitely presented).
We show that a wreath product $A\wr B$ is finitely generated if and only if both $A$ and $B$ are finitely generated.
It is also proved that a necessary condition for $A\wr B$ to be finitely presented is that both $A$ and $B$ are finitely presented.
Finally, we find some sufficient conditions for a wreath product to be finitely presented.
\end{abstract}

\maketitle

\section{\large{Introduction}\nopunct}
Finite generation and finite presentability are fundamental properties in the theory of monoid acts (see \cite{Kilp}).
The related notion of {\em coherency} has been intensively studied (see \cite{Gould1}, \cite{Gould2}),
and the relationhip between the monoid property of being {\em right Noetherian} and finite presentability of acts was considered in \cite{Normak}.
In this paper we continue the work initiated in \cite{Miller} of developing a systematic theory of presentations of monoid acts.
We shall consider two different product constructions for acts, namely direct products and wreath products.
For each construction, we investigate the preservation of the properties of being finitely generated and finitely presented.\par
The paper is structured as follows.
In Section 2, we collect some basic definitions and facts about generating sets and presentations.
We study direct products in sections 3 and 4.
Section 3 is concerned with diagonal acts, which are a specific type of direct product.
We consider direct products of acts in general in Section 4.
In that section we construct a general generating set and presentation for a direct product $A\times B$. 
This leads to characterisations of the monoids $M$ that have the property that, for any two $M$-acts $A$ and $B$,
the direct product $A\times B$ is finitely generated (resp. finitely presented)
if and only if both $A$ and $B$ are finitely generated (resp. finitely presented).\par 
In Section 5, we study wreath product of acts.
We characterise the wreath products $A\wr B$ that are finitely generated.
We also construct a general presentation for a wreath product $A\wr B$,
from which we deduce results pertaining to finite presentability.

\section{\large{Preliminaries}\nopunct}

Let $M$ be a monoid with identity 1. An {\em $M$-act} is a non-empty set $A$ together with a map 
$$A\times M\to A, (a, m)\mapsto am$$
such that $a(mn)=(am)n$ and $a1=a$ for all $a\in A$ and $m, n\in M.$
For instance, $M$ itself is an $M$-act via right multiplication.\par
A subset $U$ of an $M$-act $A$ is a {\em generating set} for $A$ if for any $a\in A$, there exist $u\in U, m \in M$ such that $a=um$.
We write $A=\langle U\rangle$ if $U$ is a generating set for $A$.
An $M$-act $A$ is said to be {\em finitely generated} (resp. {\em cyclic}) if it has a finite (resp. one-element) generating set.\par
A congruence $\rho$ on an $M$-act $A$ is {\em generated} by a set $X\subseteq A\times A$ if $\rho$ is the smallest congruence containing $X$,
and $\rho$ is said to be {\em finitely generated} if it has a finite generating set.\par
For other basic definitions and facts about monoid acts, we refer the reader to \cite{Kilp}.\par
Now, let $A$ be an $M$-act and let $X\subseteq A\times A$.  We introduce the notation
$$\overline{X}=X\cup\{(u, v) : (v, u)\in X\},$$
which will be used throughout the paper.
For $a, b\in A$, an $X${\em -sequence connecting} $a$ and $b$ is any sequence
$$a=p_1m_1, \; q_1m_1=p_2m_2, \; q_2m_2=p_3m_3, \; \dots, \; q_km_k=b,$$
where $(p_i, q_i)\in \overline{X}$ and $m_i\in M$ for $1\leq i\leq k$.\par
We have the following basic lemma (see \cite[Section 1.4]{Kilp} for a proof):

\begin{lemma}
Let $M$ be a monoid.  Let $A$ be an $M$-act, let $\rho$ be a congruence on $A$ generated by a set $X\subseteq A\times A$, and let $a, b\in A$.
Then $(a, b)\in\rho$ if and only if either $a=b$ or there exists an $X$-sequence connecting $a$ and $b$.
\end{lemma}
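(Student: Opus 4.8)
The plan is to introduce the relation $\sigma$ on $A$ defined by declaring $(a,b)\in\sigma$ precisely when either $a=b$ or there exists an $X$-sequence connecting $a$ and $b$, and then to show that $\sigma$ coincides with $\rho$. Since $\rho$ is by definition the smallest congruence on $A$ containing $X$, it suffices to establish three things: that $\sigma$ is itself a congruence, that $X\subseteq\sigma$, and that $\sigma$ is contained in every congruence of $A$ that contains $X$.

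First I would verify that $\sigma$ is a congruence. Reflexivity is immediate from the clause $a=b$. For symmetry, given an $X$-sequence from $a$ to $b$, I would reverse it step by step; this works precisely because the defining pairs are drawn from the symmetric set $\overline{X}$, so that reading $(p_i,q_i)$ backwards as $(q_i,p_i)$ again lands in $\overline{X}$. Transitivity follows by concatenation: if one $X$-sequence ends with $q_km_k=b$ and another begins with $b=p_1'm_1'$, the shared value $b$ glues them into a single $X$-sequence from $a$ to $c$. Finally, right compatibility is obtained by multiplying every equation of an $X$-sequence on the right by an arbitrary $n\in M$ and using associativity $q_im_in=q_i(m_in)$, which turns an $X$-sequence from $a$ to $b$ into one from $an$ to $bn$.

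Next, the inclusion $X\subseteq\sigma$ is witnessed by one-step sequences: for $(p,q)\in X\subseteq\overline{X}$, taking $k=1$, $(p_1,q_1)=(p,q)$ and $m_1=1$ gives the $X$-sequence $p=p\cdot 1$, $q\cdot 1=q$. For minimality, let $\tau$ be any congruence with $X\subseteq\tau$. Since $\tau$ is symmetric it contains $\overline{X}$, and since it is compatible it contains every pair $(p_im_i,q_im_i)$; reading an $X$-sequence connecting $a$ and $b$ as a chain $a=p_1m_1\mathrel{\tau}q_1m_1=p_2m_2\mathrel{\tau}\cdots\mathrel{\tau}q_km_k=b$ and invoking transitivity of $\tau$ yields $(a,b)\in\tau$. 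Hence $\sigma\subseteq\tau$, and combined with the previous steps this forces $\sigma=\rho$, which is exactly the claimed description of $\rho$.

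I do not anticipate a genuine obstacle here, as the result is foundational and each verification is routine; the only point demanding care is the bookkeeping in the symmetry and transitivity arguments, where one must track how the endpoints of consecutive equations match up and confirm that the use of $\overline{X}$ rather than $X$ itself is exactly what makes the reversed and concatenated sequences legitimate $X$-sequences.
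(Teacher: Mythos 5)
Your proof is correct and follows essentially the standard argument: the paper does not prove this lemma itself but defers to \cite[Section 1.4]{Kilp}, where the congruence generated by $X$ is characterised in exactly this way, namely by showing that the relation defined via $X$-sequences (together with the diagonal) is a congruence containing $X$ and is contained in every congruence containing $X$. The verifications you outline (reversal using the symmetry of $\overline{X}$, concatenation for transitivity, right multiplication for compatibility, one-step sequences for $X\subseteq\sigma$, and the chain argument for minimality) are precisely the required steps and contain no gaps.
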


A generating set $U$ for an $M$-act $A$ is a {\em basis} of $A$ if for any $a\in A,$ there exist unique $u\in U$ and $m\in M$ such that $a=um$. 
An $M$-act $A$ is said to be {\em free} if it has a basis.  
For example, the $M$-act $M$ is free with basis $\{1\}$.\par
We have the following structure theorem for free acts.\par

\begin{thm}\cite[Theorem 1.5.13]{Kilp}.
\label{freestructure}
An $M$-act $A$ is free if and only if it is $M$-isomorphic to a disjoint union of $M$-acts all of which are $M$-isomorphic to $M$.
\end{thm}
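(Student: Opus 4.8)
The plan is to prove the two implications by exhibiting, in each direction, the evident correspondence between a basis of $A$ and the components of the disjoint-union decomposition. Since both ``being free'' and ``being $M$-isomorphic to a disjoint union of copies of $M$'' are clearly invariant under $M$-isomorphism (the image of a basis under an $M$-isomorphism is again a basis, and a disjoint union is carried to a disjoint union), I may argue up to isomorphism throughout.

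For the backward direction, suppose $A=\bigsqcup_{i\in I}A_i$ where each $A_i$ is $M$-isomorphic to $M$; here I use that in a disjoint union the action is computed componentwise, so $am\in A_i$ whenever $a\in A_i$. Fixing $M$-isomorphisms $\theta_i\colon M\to A_i$ and setting $e_i=\theta_i(1)$, I would show that $U=\{e_i:i\in I\}$ is a basis. Generation follows because every $a\in A_i$ equals $\theta_i(m)=\theta_i(1)m=e_im$ for some $m\in M$. For uniqueness, if $e_im=e_jn$ then both sides lie in a single component (as the action preserves components), forcing $i=j$; and then $m=n$ because $\{1\}$ is a basis of $M$, as recorded in the excerpt.

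For the forward direction, suppose $A$ is free with basis $U$, and for each $u\in U$ put $A_u=uM$. I would verify four points: (i) each $A_u$ is a subact, immediate from $(um)n=u(mn)$; (ii) $A=\bigcup_{u\in U}A_u$, which is precisely the statement that $U$ generates $A$; (iii) the $A_u$ are pairwise disjoint, since $um=u'm'$ forces $u=u'$ by the uniqueness clause of the basis definition; and (iv) each $A_u$ is $M$-isomorphic to $M$ via the map $\phi_u\colon m\mapsto um$. Taken together these say that $A$ is the disjoint union of the subacts $A_u$, each a copy of $M$.

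The one step requiring genuine care --- and the point at which the uniqueness hypothesis does all the work --- is (iv). The map $\phi_u$ is visibly a surjective $M$-homomorphism, but its injectivity is nothing other than the uniqueness of the representation $a=um$ for the fixed basis element $u$; without the uniqueness clause in the definition of a basis this could fail, and $A_u$ need not be free. Once (iii) and (iv) are in hand, the decomposition $A=\bigsqcup_{u\in U}A_u$ is exactly of the required form, completing the proof.
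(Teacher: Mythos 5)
Your proof is correct and complete: both directions are handled by the standard correspondence between basis elements and components, and you rightly isolate the uniqueness clause as the point where injectivity of $m\mapsto um$ (and disjointness of the subacts $uM$) is secured. The paper itself gives no proof of this statement---it is quoted from \cite[Theorem~1.5.13]{Kilp}---and your argument is essentially the canonical one found there, so there is nothing to compare beyond noting that your treatment is the expected one.
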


This leads to the following explicit construction of a free act.

\begin{con}\cite[Theorem 1.5.14]{Kilp}. Let $M$ be a monoid, let $X$ be a non-empty set, and consider the set $X\times M.$  
With the operation $$(x, m)n = (x, mn)$$ for all $(x, m)\in X\times M$ and $n\in M,$ the set $X\times M$ is a free $M$-act with basis $X\times\{1\}$.
We denote this $M$-act by $F_{X, M}$, although we will usually just write $F_X$.
We will also usually write $x\cdot m$ for $(x, m)$ and $x$ for $(x, 1)$.
\end{con}

An {\em ($M$-act) presentation} is a pair $\langle X\,|\,R\rangle$, where $X$ is a non-empty set and $R\subseteq F_X\times F_X$ is a relation on the free $M$-act $F_X$.
An element $x$ of $X$ is called a {\em generator}, while an element $(u, v)$ of $R$ is called a {\em (defining) relation}, and is usually written as $u=v$.\par
An $M$-act $A$ is said to be {\em defined by the presentation} $\langle X\,|\,R\rangle$ if $A$ is $M$-isomorphic to the factor act $F_X/\rho,$ 
where $\rho$ is the congruence on $F_X$ generated by $R$.\par
Let $A$ be an $M$-act and $\theta : A\to F_X/\rho$ an $M$-isomorphism, where $\rho$ is a congruence on $F_X$ generated by $R.$
We say an element $w\in F_X$ {\em represents} an element $a\in A$ if $a\theta=[w]_{\rho}$.
For $w_1, w_2\in F_X$, we write $w_1\equiv w_2$ if $w_1$ and $w_2$ are equal in $F_X$, and $w_1=w_2$ if they represent the same element of $A$.

\begin{defn}
Let $\langle X\,|\,R\rangle$ be a presentation and let $w_1, w_2 \in F_X$. 
We say that the relation $w_1=w_2$ is a {\em consequence} of $R$ if $w_1\equiv w_2$ or there is an $R$-sequence connecting $w_1$ and $w_2$.\par
We say that $w_2$ is obtained from $w_1$ by an {\em application of a relation} from $R$ if there exists an $R$-sequence with only two distict terms connecting $w_1$ and $w_2$.
\end{defn}

\begin{defn}
Let $M$ be a monoid, let $A$ be an $M$-act, and let $\phi : X\times\{1\}\to A$ be a map.
Let $\theta : F_X\to A$ be the unique $M$-homomorphism extending $\phi,$ and let $R$ be a subset of $F_X\times F_X.$
We say that $A$ {\em satisfies} $R$ (with respect to $\phi$) if $u\theta=v\theta$ for every $(u, v)\in R.$
\end{defn}

The following fact will be used throughout the paper, usually without explicit mention.

\begin{prop}
\label{presentationcriteria}
Let $M$ be a monoid, let $A$ be an $M$-act generated by a set $X,$ let $R\subseteq F_X\times F_X,$
and let $\phi$ be the canonical map $X\times\{1\}\to X.$
Then $\langle X\,|\,R\rangle$ is a presentation for $A$ if and only if the following conditions hold:
\begin{enumerate}
 \item $A$ satisfies $R$ (with respect to $\phi$);
 \item if $w_1, w_2 \in F_X$ such that $A$ satisfies $w_1=w_2$, then $w_1=w_2$ is a consequence of $R.$
\end{enumerate}
\end{prop}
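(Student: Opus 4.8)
The plan is to reduce the whole statement to a comparison of two congruences on $F_X$. First I would let $\theta\colon F_X\to A$ be the unique $M$-homomorphism extending $\phi$; since $X$ generates $A$, the map $\theta$ is surjective. Writing $\sigma=\ker\theta=\{(w_1,w_2)\in F_X\times F_X : w_1\theta=w_2\theta\}$, the first isomorphism theorem for acts gives $A\cong F_X/\sigma$ via $[w]_\sigma\mapsto w\theta$. Let $\rho$ denote the congruence on $F_X$ generated by $R$. The key observation is that, with $\phi$ (hence $\theta$) fixed, the assertion ``$\langle X\,|\,R\rangle$ is a presentation for $A$'' amounts precisely to $\rho=\sigma$: the isomorphism $A\cong F_X/\rho$ sought is the one induced by $\theta$, and such an isomorphism exists exactly when the two kernels coincide.

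Next I would translate each hypothesis into a containment between $R$, $\rho$ and $\sigma$. Condition $(1)$, that $A$ satisfies $R$, says exactly that $u\theta=v\theta$ for all $(u,v)\in R$, i.e. $R\subseteq\sigma$. Since $\sigma$ is a congruence containing $R$ and $\rho$ is by definition the smallest such congruence, this is equivalent to $\rho\subseteq\sigma$. For condition $(2)$, ``$A$ satisfies $w_1=w_2$'' means $(w_1,w_2)\in\sigma$, while ``$w_1=w_2$ is a consequence of $R$'' means, by the basic lemma relating congruence membership to the existence of $X$-sequences, that $(w_1,w_2)\in\rho$ (the case $w_1\equiv w_2$ being absorbed into the equality clause of that lemma). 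Hence condition $(2)$ asserts exactly $\sigma\subseteq\rho$.

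Putting these together, conditions $(1)$ and $(2)$ hold simultaneously if and only if $\rho\subseteq\sigma$ and $\sigma\subseteq\rho$, that is $\rho=\sigma$, which by the first step is equivalent to $\langle X\,|\,R\rangle$ being a presentation for $A$. This settles both directions at once, so there is no need to argue the ``only if'' and ``if'' implications separately.

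I expect the only genuinely delicate point to be the first step: pinning down what ``presentation for $A$'' should mean when $A$ already carries the generating set $X$. The definition only demands an abstract $M$-isomorphism $A\cong F_X/\rho$, so I would need to argue that in this situation the isomorphism can (and must) be taken to be the one induced by $\theta$, equivalently that $\rho=\sigma$. Concretely, I would check that a well-defined map $F_X/\rho\to A$, $[w]_\rho\mapsto w\theta$, exists precisely when $\rho\subseteq\sigma$ (condition $(1)$), is automatically surjective because $X$ generates $A$, and is injective precisely when $\sigma\subseteq\rho$ (condition $(2)$). Everything beyond this is a routine application of the first isomorphism theorem and the $X$-sequence lemma.
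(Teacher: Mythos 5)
The paper states this proposition without proof (it is invoked as a standard fact), so there is no author's argument to compare against; your proof is the natural one and its core is correct. The reduction to an equality of congruences is exactly right: writing $\theta\colon F_X\to A$ for the canonical epimorphism, $\sigma=\ker\theta$ and $\rho$ for the congruence generated by $R$, condition (1) says $R\subseteq\sigma$, which (since $\sigma$ is a congruence and $\rho$ is the least congruence containing $R$) is equivalent to $\rho\subseteq\sigma$; condition (2) says precisely $\sigma\subseteq\rho$, using the paper's lemma characterising membership in the congruence generated by $R$ via $R$-sequences, whose ``$a=b$ or there is a sequence'' clause matches the ``$w_1\equiv w_2$ or there is an $R$-sequence'' clause in the definition of consequence. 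Together with the first isomorphism theorem this gives the ``if'' direction in full: if (1) and (2) hold then $\rho=\sigma$ and $\theta$ induces an isomorphism $F_X/\rho\cong A$.

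The one place you should not wave your hands is the ``must'' in your last paragraph. Under the paper's literal definition, ``$\langle X\,|\,R\rangle$ is a presentation for $A$'' asserts only the existence of \emph{some} $M$-isomorphism $A\cong F_X/\rho$, and that does not force $\rho=\sigma$. For instance, take $M$ trivial, $X=\{x,y,z\}$, $A=\{a,b\}$, with $\phi$ sending $x,y\mapsto a$ and $z\mapsto b$, and $R=\{(y,z)\}$: then $F_X/\rho$ has two elements and is abstractly isomorphic to $A$, yet $A$ does not satisfy the relation $y=z$ with respect to $\phi$, so condition (1) fails and the ``only if'' direction is false under the literal reading. The intended reading --- signalled by the explicit appearance of the canonical map $\phi$ in the statement, and the one used throughout the paper --- is that the presentation is taken \emph{with respect to the generating set $X$}, i.e.\ that the isomorphism is the one induced by $\theta$, which is exactly the statement $\rho=\sigma$. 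You should state that convention explicitly and then your two-containment argument settles both directions at once; attempting instead to derive $\rho=\sigma$ from a mere abstract isomorphism would fail.
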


\begin{defn}
A {\em finite presentation} is a presentation $\langle X\,|\,R\rangle$ where $X$ and $R$ are finite.  
An $M$-act $A$ is {\em finitely presented} if it can be defined by a finite presentation.
\end{defn}

The property of being finitely presented is independent of the choice of a finite generating set:

\begin{prop}\cite[Proposition 3.8]{Miller}
\label{invariance}
Let $M$ be a monoid, let $A$ be an $M$-act defined by a finite presentation $\langle X\,|\,R\rangle$, and let $Y$ be another finite generating set for $A$.
Then $A$ can be defined by a finite presentation in terms of $Y.$
\end{prop}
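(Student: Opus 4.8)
The plan is to emulate the theory of Tietze transformations: I construct a finite presentation over $Y$ by rewriting the relations $R$ in terms of $Y$ and then adjoining finitely many relations that record how the old and new generators are expressed in terms of one another. Since $Y$ generates $A$, for each $x\in X$ there exist $y\in Y$ and $m\in M$ with $x=ym$ in $A$; fixing such a choice gives a word $w_x:=y\cdot m\in F_Y$ representing the same element of $A$ as $x$. By freeness of $F_X$, the assignment $x\mapsto w_x$ extends to a unique $M$-homomorphism $\psi\colon F_X\to F_Y$. Symmetrically, since $X$ generates $A$, for each $y\in Y$ I fix a word $v_y\in F_X$ of the form $x'\cdot n$ representing the same element of $A$ as $y$, and let $\chi\colon F_Y\to F_X$ be the induced $M$-homomorphism $y\mapsto v_y$. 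The key observation is that composing either $\psi$ or $\chi$ with the respective quotient map onto $A$ recovers the original quotient map, since both sides are $M$-homomorphisms agreeing on a generating set; consequently $w\psi$ (resp.\ $w\chi$) always represents the same element of $A$ as $w$.

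I then take as my candidate the presentation $\langle Y\,|\,S\rangle$, where
$$S=\{(u\psi, v\psi) : (u,v)\in R\}\cup\{(y, v_y\psi) : y\in Y\}.$$
This is finite because $R$ and $Y$ are. Condition (1) of Proposition~\ref{presentationcriteria} is immediate from the previous observation, as each relation of $S$ equates two words representing the same element of $A$. For condition (2), suppose $w_1, w_2\in F_Y$ satisfy $w_1=w_2$ in $A$. Applying $\chi$ gives $w_1\chi=w_2\chi$ in $A$, and since $\langle X\,|\,R\rangle$ is a presentation for $A$, there is an $R$-sequence connecting $w_1\chi$ and $w_2\chi$ in $F_X$. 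Because $\psi$ is an $M$-homomorphism, applying it termwise to this sequence yields an $S$-sequence connecting $w_1\chi\psi$ and $w_2\chi\psi$ in $F_Y$; thus $w_1\chi\psi=w_2\chi\psi$ is a consequence of $S$.

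It remains to bridge the gap between $w_i$ and $w_i\chi\psi$, which is the crux of the argument: $\chi\psi$ is not the identity on $F_Y$, but it is the identity modulo the congruence defining $A$, and the relations $(y, v_y\psi)$ are included in $S$ precisely to record this. Indeed, any element of $F_Y$ has the form $y\cdot m$ for some $y\in Y$ and $m\in M$, and then $y\cdot m$ and $(v_y\psi)\cdot m=(y\cdot m)\chi\psi$ are connected by a single application of the relation $y=v_y\psi$. Hence $w_i=w_i\chi\psi$ is a consequence of $S$ for $i=1,2$, and concatenating the three $S$-sequences shows that $w_1=w_2$ is a consequence of $S$, as required. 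The only point demanding care is the bookkeeping ensuring that $\psi$ carries an $\overline{R}$-step to a genuine $\overline{S}$-step and that the correcting sequences fit together at their endpoints; the conceptual content lies entirely in the back-and-forth relations, which are the act-theoretic analogue of the generator-rewriting Tietze transformations.
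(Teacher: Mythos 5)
Your argument is correct: the back-and-forth homomorphisms $\psi$ and $\chi$, the rewritten relations $R\psi$, the bridging relations $y=v_y\psi$, and the verification via Proposition~\ref{presentationcriteria} all fit together as claimed (the only degenerate case, $w_1\chi\equiv w_2\chi$, is harmless). The paper itself quotes this result from \cite{Miller} without reproducing a proof, and your Tietze-style argument is precisely the standard one used there, so there is nothing further to compare.
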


\begin{corollary}
\label{invariancecorollary}
Let $M$ be a monoid, and let $A$ be a finitely presented $M$-act defined by a presentation $\langle X\,|\,S\rangle$ where $X$ is finite and $S$ is infinite.  
Then there exists a finite subset $S^{\prime}\subseteq S$ such that $A$ is defined by the finite presentation $\langle X\,|\,S^{\prime}\rangle$.
\end{corollary}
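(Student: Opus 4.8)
The plan is to exploit the fact that finite presentability is independent of the chosen finite generating set, together with the finite length of the sequences that witness consequences. Since $A$ is finitely presented and $X$ is already a finite generating set for $A$, Proposition \ref{invariance} supplies a finite presentation $\langle X\,|\,R\rangle$ for $A$ on the same set of generators $X$. The idea is then to extract from the infinite set $S$ a finite subset that already suffices to derive every one of the finitely many relations in $R$.

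First I would note that, since $\langle X\,|\,R\rangle$ is a presentation for $A$, condition (1) of Proposition \ref{presentationcriteria} gives that $A$ satisfies every relation in $R$. Hence, applying condition (2) of Proposition \ref{presentationcriteria} to the presentation $\langle X\,|\,S\rangle$, each relation $(u,v)\in R$ is a consequence of $S$: either $u\equiv v$, or there is an $S$-sequence connecting $u$ and $v$. Any such $S$-sequence has finite length and therefore involves only finitely many relations from $S$. Collecting, over the finitely many $(u,v)\in R$, all the relations of $S$ that occur in these sequences yields a finite subset $S'\subseteq S$ with the property that every relation of $R$ is already a consequence of $S'$.

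It then remains to verify, again via Proposition \ref{presentationcriteria}, that $\langle X\,|\,S'\rangle$ defines $A$. Condition (1) is immediate, since $S'\subseteq S$ and $A$ satisfies $S$. For condition (2), suppose $A$ satisfies $w_1=w_2$ for some $w_1,w_2\in F_X$. Because $\langle X\,|\,R\rangle$ defines $A$, the relation $w_1=w_2$ is a consequence of $R$, so either $w_1\equiv w_2$ (trivial) or there is an $R$-sequence connecting $w_1$ and $w_2$. I would then transform this $R$-sequence into an $S'$-sequence step by step: a single application of a relation $(p,q)\in\overline{R}$ transforms some $pm$ into $qm$, and since $p=q$ is a consequence of $S'$ there is an $S'$-sequence from $p$ to $q$; multiplying that sequence on the right by $m$ produces an $S'$-sequence from $pm$ to $qm$, which can be spliced in place of the original step. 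Concatenating these over all steps gives an $S'$-sequence connecting $w_1$ and $w_2$, whence $w_1=w_2$ is a consequence of $S'$.

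The main obstacle is the splicing argument of the last paragraph: one must check that $S'$-sequences are preserved under right multiplication by elements of $M$, so that a derivation of $p=q$ can be transported to a derivation of $pm=qm$ and inserted into the longer chain without breaking the intermediate equalities. Once this closure property is observed, the replacement of the $R$-sequence by an $S'$-sequence is routine, both conditions of Proposition \ref{presentationcriteria} hold, and $\langle X\,|\,S'\rangle$ is a finite presentation for $A$.
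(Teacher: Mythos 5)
Your argument is correct and is essentially the derivation the paper intends: the corollary is stated without an explicit proof as a consequence of Proposition \ref{invariance}, and the standard route is exactly yours --- obtain a finite presentation $\langle X\,|\,R\rangle$ on the same generators, express each of the finitely many relations of $R$ as a consequence of finitely many relations of $S$, and then splice. The closure property you flag (an $S'$-sequence from $p$ to $q$ yields, after right multiplication of every term by $m$, an $S'$-sequence from $pm$ to $qm$) does hold, since each step $p_i m_i,\ q_i m_i$ simply becomes $p_i (m_i m),\ q_i (m_i m)$, so the splicing goes through and no gap remains.
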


Let $M$ be a monoid with a generating set $X$, and let $A$ be an $M$-act.
It is clear that $A$ is defined by the presentation
$$\langle A\,|\,a\cdot x=ax\; (a\in A, x\in X)\rangle.$$
Therefore, we have:

\begin{lemma}
If $M$ is a finitely generated monoid and $A$ is a finite $M$-act, then $A$ is finitely presented.
\end{lemma}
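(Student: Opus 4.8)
The plan is to read off a finite presentation directly from the one displayed immediately above the statement. Since $M$ is finitely generated, I would begin by fixing a \emph{finite} generating set $X$ for $M$, and then consider the presentation
$$\langle A\,|\,a\cdot x=ax\; (a\in A, x\in X)\rangle,$$
whose generating set is $A$ itself and whose relation set is $R=\{(a\cdot x, ax):a\in A, x\in X\}$. The whole argument then reduces to two observations: that this presentation does define $A$, and that it is finite whenever $A$ is.

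To see that it defines $A$, I would appeal to Proposition~\ref{presentationcriteria}, taking $\phi$ to be the canonical map identifying each generator with the corresponding element of $A$ (note that $A=\langle A\rangle$, since $a=a1$ for every $a\in A$). Condition~(1) is immediate: for the extending homomorphism $\theta:F_A\to A$ one has $(a\cdot x)\theta=(a\theta)x=ax$, so $A$ satisfies $R$. For condition~(2), the key point is that every element $a\cdot m$ of $F_A$ collapses modulo $R$ to a single generator. Writing $m=x_1\cdots x_k$ with each $x_i\in X$, and applying the relations one at a time, gives
$$a\cdot m = (a\cdot x_1)\,x_2\cdots x_k = (ax_1)\cdot x_2\cdots x_k = \cdots = a x_1\cdots x_k = am,$$
where each step replaces a leading factor of the form $(\,\cdot\,)x_i$ by the corresponding generator, and the final term is the generator equal to the image of $a\cdot m$ under $\theta$. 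Hence any two words of $F_A$ with the same image under $\theta$ reduce to the same generator, so their equality is a consequence of $R$; this is exactly the content of the ``clear'' remark preceding the statement.

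With validity in hand, the conclusion is a pure cardinality count. Choosing $X$ finite is possible precisely because $M$ is finitely generated, and the hypothesis that $A$ is finite makes the generating set $A$ finite and bounds $|R|\leq |A|\cdot|X|$. Thus $\langle A\,|\,R\rangle$ is a finite presentation defining $A$, and so $A$ is finitely presented. I do not expect a genuine obstacle here: the only content beyond counting is the reduction argument verifying the displayed presentation, and even that is routine once one expresses an arbitrary element of $M$ as a product of the chosen generators.
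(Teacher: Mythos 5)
Your proof is correct and follows exactly the paper's route: the lemma is an immediate consequence of the displayed presentation $\langle A\,|\,a\cdot x=ax\;(a\in A, x\in X)\rangle$, which is finite when $X$ and $A$ are. The paper leaves the verification that this presentation defines $A$ as ``clear''; your reduction of $a\cdot m$ to the generator $am$ by writing $m=x_1\cdots x_k$ is the standard justification and is accurate.
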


However, it was shown in \cite[Example 3.12]{Miller} that there exist monoids $M$ for which the trivial $M$-act is not finitely presented,
and the trivial $M$-act being finitely presented is not equivalent to $M$ being finitely generated \cite[Remark 3.13]{Miller}.

\section{Diagonal acts}

\noindent For any monoid $M$, the set $M\times M$ can be made into an $M$-act by defining
$$(a, b)c=(ac, bc)$$
for all $a, b, c\in M;$ we refer to it as the {\em diagonal $M$-act}.
Diagonal acts were first mentioned, implicitly, in a problem in the American Mathematical Monthly \cite{Bulman-Fleming},
and have since been intensively studied by several authors (see \cite{Gallagher1}, \cite{Gallagher2}, \cite{Robertson1}).
A systematic study of finite generation of diagonal acts was undertaken by Gallagher in his PhD thesis \cite{Gallagher}.
He showed that infinite monoids from various `standard' monoid classes, 
such as commutative, inverse, idempotent, cancellative, completely regular and completely simple,
do not have finitely generated diagonal acts (see \cite{Gallagher1}).
However, it was shown in \cite{Gallagher2} that the diagonal act is cyclic for various transformation monoids on an infinite set.\par 
In this section we consider both finite generation and finite presentability of diagonal acts, 
primarily with the next section in mind where diagonal acts will play a key role.\par
We begin with finite generation.  
As mentioned above, it is easy to find monoids that have a non-finitely generated diagonal act.
We present the following example:

\begin{ex}
Let $M$ be an infinite monoid such that $M\!\setminus\!\{1\}$ is a subsemigroup.
We claim that $\{1\}\times M$ is contained in any generating set $U$ for $M\times M,$
and hence $M\times M$ is not finitely generated.
Indeed, if $m\in M,$ then $(1, m)=(u, v)n$ for some $(u, v)\in U$ and $n\in M,$ so $1=un$ and $m=vn.$
Since $M\!\setminus\!\{1\}$ is an ideal of $M$, we must have that $u=n=1$ and hence $m=v.$
\end{ex}

We now make the following observation.

\begin{lemma}
\label{dagen}
Let $M$ be a monoid.
The diagonal $M$-act is finitely generated if and only if it is has a generating set of the form $U\times U$ for some finite subset $U$ of $M$.
\end{lemma}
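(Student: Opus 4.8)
The plan is to prove the two implications separately. The reverse implication is immediate, and the forward implication reduces to the single observation that any set containing a generating set is again a generating set.

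For the backward direction, I would note that if $U$ is finite then $U\times U$ is finite, of cardinality at most $|U|^2$. Hence a generating set of the form $U\times U$ is in particular a \emph{finite} generating set, so the diagonal $M$-act is finitely generated.

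For the forward direction, I would begin with an arbitrary finite generating set $V=\{(a_1,b_1),\dots,(a_n,b_n)\}$ for the diagonal act. The idea is to ``symmetrize'' the coordinates: let $U=\{a_1,\dots,a_n,b_1,\dots,b_n\}$ be the finite set of all entries occurring in the pairs of $V$. By construction each $(a_i,b_i)$ has both coordinates in $U$, so $V\subseteq U\times U\subseteq M\times M$, and $U$ is finite with $|U|\leq 2n$.

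The key step is then to record that enlarging a generating set preserves the generating property. Since $V$ generates, every element of $M\times M$ can be written as $(a_i,b_i)c$ for some $i$ and some $c\in M$; because $(a_i,b_i)\in U\times U$, the very same expression exhibits the element as a translate of a member of $U\times U$. Thus $U\times U$ generates $M\times M$, giving a generating set of the required form. There is no substantial obstacle here: the only point to notice is that passing to the set of coordinates automatically yields a generating set closed under forming all pairs, and that this enlargement cannot shrink the subact generated. The value of the lemma lies in this normalization, which will be exploited in the next section.
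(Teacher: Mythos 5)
Your proof is correct: the backward direction is immediate, and your forward direction (collecting all coordinates of a finite generating set into a single finite set $U$, so that the original generators lie in $U\times U$ and hence $U\times U$ still generates) is exactly the intended argument. The paper states this lemma as an observation without proof, so there is nothing further to compare against.
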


We have the following results, due to Gallagher.

\begin{thm}\cite[Theorem 4.1.5, Corollary 4.1.9]{Gallagher}
\label{Gallagher}
Let $X$ be any infinite set, and let $M$ be any of the following transformation monoids on $X$:
\begin{itemize}
\item $\mathcal{B}_X$ (the monoid of binary relations); 
\item $\mathcal{T}_X$ (the full transformation monoid);
\item $\mathcal{P}_X$ (the monoid of partial transformations); 
\item $\mathcal{F}_X$ (the monoid of full finite-to-one transformations).
\end{itemize}
Then the diagonal $M$-act is a free cyclic $M$-act (and hence finitely presented).
\end{thm}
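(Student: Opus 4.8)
The plan is to prove that in each case the diagonal act is \emph{free cyclic} by directly exhibiting a one-element basis. By the definition of a basis, it suffices to produce a single pair $(p,q)\in M\times M$ with the property that for every $(a,b)\in M\times M$ there is a \emph{unique} $c\in M$ satisfying $pc=a$ and $qc=b$; this says precisely that $\{(p,q)\}$ generates the diagonal act and does so freely, so the diagonal act is isomorphic to the right regular act $M\cong F_{\{x\}}$. The finite presentability assertion then comes for free: $F_{\{x\}}$ is defined by the presentation $\langle x\,|\,\emptyset\rangle$, which is finite.

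The construction of $(p,q)$ exploits the only hypothesis available, namely that $X$ is infinite, so that $X$ admits a partition $X=X\sigma\sqcup X\tau$ into two pieces each in bijection with $X$. Concretely I would fix injective maps $\sigma,\tau\in M$ whose images are disjoint and cover $X$ (as total injections these lie in each of $\mathcal{T}_X$, $\mathcal{P}_X$, $\mathcal{F}_X$, and as functional relations they lie in $\mathcal{B}_X$), and take $p=\sigma$, $q=\tau$. Given a target pair $(a,b)$, the requirement $pc=a$, $qc=b$ forces $c$ on $X\sigma$ to send $x\sigma\mapsto xa$ and on $X\tau$ to send $x\tau\mapsto xb$; since these two pieces partition $X$ the two prescriptions never conflict and together determine $c$ completely. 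This simultaneously yields existence (the $c$ so defined does satisfy both equations, by unwinding the action) and uniqueness (no freedom is left).

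The one genuinely case-dependent step, and the place I expect the main work to lie, is checking that the reconstructed $c$ actually belongs to the monoid in question. For $\mathcal{T}_X$ and $\mathcal{P}_X$ this is immediate, as totality (resp. the domain of definition) of $c$ is inherited directly from $a$ and $b$. For $\mathcal{F}_X$ one must verify the finite-to-one condition: each fibre $c^{-1}(w)$ is the disjoint union of the $\sigma$- and $\tau$-images of the fibres $a^{-1}(w)$ and $b^{-1}(w)$, hence finite. For $\mathcal{B}_X$ the same idea must be rephrased in terms of relational composition, defining $c$ by declaring $(x\sigma,w)\in c\iff(x,w)\in a$ and $(x\tau,w)\in c\iff(x,w)\in b$; then $pc=a$ and $qc=b$ follow directly from the definition of composition, and uniqueness again holds because $X\sigma$ and $X\tau$ exhaust $X$.
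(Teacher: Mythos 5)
The paper states this result purely as a citation to Gallagher's thesis and gives no proof of its own, so there is no internal argument to compare against; your proof is correct as written. Your construction --- two injections $\sigma,\tau$ with disjoint images partitioning $X$, so that the element $c$ with $(\sigma,\tau)c=(a,b)$ is reconstructed uniquely piecewise from $a$ on $X\sigma$ and $b$ on $X\tau$ --- is exactly the standard argument behind the cited result, and the case-by-case membership checks (finite fibres for $\mathcal{F}_X$, relational composition for $\mathcal{B}_X$) are the right ones.
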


\begin{lemma}\cite[Lemma 2.2]{Gallagher1}
\label{daidealfg}
Let $M$ be a monoid, let $N$ be a submonoid of $M,$ and suppose that $M\!\setminus\!N$ is an ideal of $M.$
If the diagonal $M$-act is generated by a set $U\times U,$ then the diagonal $N$-act is generated by the set $V\times V$ where $V=U\cap N.$
In particular, if the diagonal $M$-act is finitely generated, then the diagonal $N$-act is finitely generated.
\end{lemma}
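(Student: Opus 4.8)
The plan is to exploit the ideal hypothesis through the following elementary observation: since $M\!\setminus\!N$ is an ideal of $M$, whenever a product $xy$ lies in $N$, both factors $x$ and $y$ must lie in $N$. Indeed, if say $x\in M\!\setminus\!N$, then $xy\in(M\!\setminus\!N)M\subseteq M\!\setminus\!N$, contradicting $xy\in N$; the argument for $y$ is symmetric. This is the key step, and once it is in place the rest is a short direct verification.

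First I would fix an arbitrary pair $(a,b)\in N\times N$ and, using that $U\times U$ generates the diagonal $M$-act, write $(a,b)=(u_1,u_2)m=(u_1m,u_2m)$ for some $u_1,u_2\in U$ and $m\in M$. Then $a=u_1m\in N$ and $b=u_2m\in N$, so applying the observation above to each of these products yields $u_1,u_2\in N$ and, crucially, $m\in N$. Consequently $u_1,u_2\in U\cap N=V$, so $(u_1,u_2)\in V\times V$, and since $m\in N$ the equation $(a,b)=(u_1,u_2)m$ exhibits $(a,b)$ as the image of an element of $V\times V$ under the $N$-action. As $(a,b)$ was arbitrary, this shows $N\times N=\langle V\times V\rangle$ as an $N$-act.

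The point that most needs care --- and the only place where the ideal hypothesis does real work --- is ensuring that the scalar $m$ actually lies in $N$; without this the displayed equation would only be valid in the diagonal $M$-act, not in the diagonal $N$-act. The ideal observation guarantees exactly this, and it simultaneously forces $u_1,u_2$ into $V$. For the final assertion, if the diagonal $M$-act is finitely generated then by Lemma~\ref{dagen} we may take $U$ finite, whence $V=U\cap N$ is finite and $V\times V$ is a finite generating set for the diagonal $N$-act.
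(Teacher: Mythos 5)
Your proof is correct. The paper does not prove this lemma (it is quoted from Gallagher \cite[Lemma 2.2]{Gallagher1}), but your argument is the standard one: the key observation that a product lying in $N$ forces every factor into $N$ is exactly the mechanism the paper itself invokes later, in the proof of Proposition \ref{daidealfp}, and you correctly identify that the real content is getting the scalar $m$ into $N$ so that the generation statement holds over $N$ rather than over $M$.
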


Given a monoid $M$, we denote by $M^0$ the monoid obtained by adjoining a zero $0$ to $M$.
The following lemma provides a generating set for the diagonal $M^0$-act using a generating set for the diagonal $M$-act.

\begin{lemma}
\label{dazerofg}
Let $M$ be a monoid.  If the diagonal $M$-act is generated by a set $U\times U,$ then the diagonal $M^0$-act is generated by the set
$$Z=\bigl((U\cup\{0\})\times (U\cup\{0\})\bigr)\!\setminus\!\{(0, 0)\}.$$
\end{lemma}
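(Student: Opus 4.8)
The plan is to verify that every element $(a,b)\in M^0\times M^0$ belongs to $\langle Z\rangle$, organising the argument by how many coordinates of $(a,b)$ equal $0$.

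First I would record a preliminary observation: the set $U$ generates $M$ as an $M$-act under right multiplication. Given $b\in M$, the diagonal element $(b,b)$ of the $M$-act $M\times M$ can be written as $(u,u')c$ with $(u,u')\in U\times U$ and $c\in M$, and comparing first coordinates gives $b=uc$ with $u\in U$. This is the fact I will need for the mixed pairs.

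Next I would run through the cases. If $a,b\in M$, then $(a,b)=(u,u')c$ for some $(u,u')\in U\times U$ and $c\in M$; since $U\times U\subseteq Z$ and the $M^0$-action restricts to the $M$-action on such terms, this exhibits $(a,b)\in\langle Z\rangle$. For $(0,0)$, I would choose any $(s,t)\in Z$ (the set is non-empty, as $U$ is) and use $(s,t)\cdot 0=(0,0)$. For a mixed pair $(0,b)$ with $b\in M$, the preliminary observation yields $t\in U$ and $c\in M$ with $tc=b$; then $(0,t)\in Z$ and $(0,t)c=(0\cdot c,\,tc)=(0,b)$, using $0\cdot c=0$ in $M^0$. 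The remaining pair $(a,0)$ with $a\in M$ is handled symmetrically by $(s,0)\in Z$.

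These computations are routine once the cases are set up. The only step that is not immediate is the mixed case, where one needs to generate a prescribed non-zero element in a single coordinate while forcing the other coordinate to be $0$; the engine for this is precisely the preliminary observation that $U$ generates the $M$-act $M$, which is why I would establish it at the outset.
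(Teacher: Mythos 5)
Your proof is correct and is the natural case analysis; the paper in fact states Lemma \ref{dazerofg} without proof, treating it as routine, and your argument (splitting $(a,b)\in M^0\times M^0$ according to which coordinates are $0$, using the generator $(0,0)$-free pairs of $Z$ and the observation that $U$ generates $M$ as an $M$-act to handle the mixed pairs) is exactly the verification one would supply. No gaps.
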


\begin{corollary}\cite[Lemma 2.5]{Gallagher1}
Let $M$ be a monoid.  Then the diagonal $M$-act is finitely generated if and only if the diagonal $M^0$-act is finitely generated.
\end{corollary}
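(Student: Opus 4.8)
The plan is to prove the two implications separately, in each case reducing directly to one of the preceding lemmas. The key structural observation, which I would record first, is that $M$ is a submonoid of $M^0$ whose complement $M^0\setminus M=\{0\}$ is an ideal of $M^0$, since $0x=x0=0$ for every $x\in M^0$. This single fact feeds the situation into Lemma~\ref{daidealfg} for one direction and sets up Lemma~\ref{dazerofg} for the other.

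For the forward direction, I would suppose the diagonal $M$-act is finitely generated. By Lemma~\ref{dagen} it then admits a generating set of the form $U\times U$ with $U\subseteq M$ finite. Lemma~\ref{dazerofg} immediately gives that the diagonal $M^0$-act is generated by
$$Z=\bigl((U\cup\{0\})\times(U\cup\{0\})\bigr)\!\setminus\!\{(0,0)\}.$$
Since $U\cup\{0\}$ is finite, $Z$ is finite, and so the diagonal $M^0$-act is finitely generated.

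For the converse, I would suppose the diagonal $M^0$-act is finitely generated and apply Lemma~\ref{daidealfg} with the ambient monoid taken to be $M^0$ and the submonoid $N$ taken to be $M$; the required hypothesis that $M^0\setminus M$ is an ideal of $M^0$ is precisely the observation recorded at the outset. The ``in particular'' clause of that lemma then yields directly that the diagonal $M$-act is finitely generated.

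I do not anticipate any genuine obstacle: both halves follow at once from lemmas already established, and the only thing needing verification is the ideal condition on $\{0\}$, which is routine. The single point requiring a moment's care is correctly matching the roles in Lemma~\ref{daidealfg}, namely that its $M$ plays the part of our $M^0$ while its $N$ plays the part of our $M$.
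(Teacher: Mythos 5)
Your proof is correct and follows exactly the route the paper intends: the corollary is stated without proof immediately after Lemma~\ref{dazerofg}, with the forward direction following from Lemma~\ref{dagen} together with Lemma~\ref{dazerofg} and the converse from the ``in particular'' clause of Lemma~\ref{daidealfg} applied to the submonoid $M$ of $M^0$ with ideal complement $\{0\}$. Your identification of the roles in Lemma~\ref{daidealfg} and your verification of the ideal condition are both accurate.
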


Our next result shows that the monoid property that the diagonal act is finitely generated is preserved by direct products.

\begin{prop}
\label{dadpfg}
Let $M$ and $N$ be two monoids.
Then the diagonal $(M\times N)$-act is finitely generated if and only if both the diagonal $M$-act and the diagonal $N$-act are finitely generated.
\end{prop}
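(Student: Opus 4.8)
The plan is to reduce both implications to Lemma~\ref{dagen}, exploiting the fact that the diagonal $(M\times N)$-act decouples along its two factors. Reordering coordinates identifies the underlying set $(M\times N)\times(M\times N)$ with $(M\times M)\times(N\times N)$ via $((a,b),(c,d))\mapsto((a,c),(b,d))$, and under this identification an element $(e,f)\in M\times N$ acts as $e$ on the $M\times M$ part and as $f$ on the $N\times N$ part. All the real work then amounts to matching up generating sets of the special ``square'' form supplied by Lemma~\ref{dagen}.

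For the backward implication, suppose both the diagonal $M$-act and the diagonal $N$-act are finitely generated. By Lemma~\ref{dagen} I may take generating sets of the form $U\times U$ and $V\times V$ with $U\subseteq M$ and $V\subseteq N$ finite. I would then claim that $W\times W$, where $W=U\times V\subseteq M\times N$, generates the diagonal $(M\times N)$-act. Given an arbitrary element $((a,b),(c,d))$, I use $U\times U$ to write $(a,c)=(u_1,u_2)e$ for some $u_1,u_2\in U$ and $e\in M$, and $V\times V$ to write $(b,d)=(v_1,v_2)f$ for some $v_1,v_2\in V$ and $f\in N$; then $((u_1,v_1),(u_2,v_2))(e,f)=((u_1e,v_1f),(u_2e,v_2f))$ recovers $((a,b),(c,d))$. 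Since $W$ is finite, this yields finite generation.

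For the forward implication, suppose the diagonal $(M\times N)$-act is finitely generated. By Lemma~\ref{dagen} it has a generating set $W\times W$ with $W\subseteq M\times N$ finite. Let $U$ be the projection of $W$ onto its first coordinate, a finite subset of $M$. To show that $U\times U$ generates the diagonal $M$-act, I take an arbitrary $(a,c)\in M\times M$ and apply the generating property of $W\times W$ to the element $((a,1_N),(c,1_N))$: this produces $(u,v),(u',v')\in W$ and $(e,f)\in M\times N$ with $a=ue$ and $c=u'e$ (together with $N$-equations I discard), whence $(a,c)=(u,u')e$ with $u,u'\in U$. Thus $U\times U$ generates the diagonal $M$-act, and by the symmetric argument the diagonal $N$-act is finitely generated as well.

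There is no single difficult step here --- the argument is essentially bookkeeping --- but the point requiring care is the forward direction: one must feed in elements with trivial $N$-coordinate (using $1_N$) so that the decomposition furnished by $W\times W$ isolates purely $M$-theoretic data, which can then be read off after discarding the $N$-equations. The decoupling observation of the first paragraph is precisely what makes both matchings transparent.
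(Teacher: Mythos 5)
Your proof is correct and follows essentially the same route as the paper: the backward direction uses the generating set $(U\times V)\times(U\times V)$ with the identical computation, and the forward direction projects a square generating set (via Lemma~\ref{dagen}) onto the $M$-coordinate. The only cosmetic difference is that you feed in the specific element $((a,1_N),(c,1_N))$ where the paper picks arbitrary $n_1,n_2\in N$; both choices work equally well.
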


\begin{proof}
($\Rightarrow$) Suppose that the diagonal $M$-act and the diagonal $N$-act are generated by finite sets $U\times U$ and $V\times V$ respectively.
We claim that the diagonal $(M\times N)$-act is generated by $(U\times V)\times(U\times V).$
Indeed, let $(m_1, n_1), (m_2, n_2)\in M\times N.$
Then $(m_1, m_2)=(u_1, u_2)m$ for some $u_1, u_2\in U$ and $m\in M,$
and $(n_1, n_2)=(v_1, v_2)n$ for some $v_1, v_2\in V$ and $n\in N.$
Therefore, we have that
$$\bigl((m_1, n_1), (m_2, n_2)\bigr)=\bigl((u_1, v_1), (u_2, v_2)\bigr)(m, n).$$
($\Leftarrow$) Suppose that the diagonal $(M\times N)$-act is generated by a set $U\times U$ where $U\subseteq M\times N$ is finite.
We claim that $M\times M$ is generated by $U^{\prime}\times U^{\prime},$ where $U^{\prime}$ is the projection of $U$ to $M.$
Indeed, let $m_1, m_2\in M.$  We choose $n_1, n_2\in N,$ so that
$$\bigl((m_1, n_1), (m_2, n_2)\bigr)=\bigl((x_1, y_1), (x_2, y_2)\bigr)(m, n)$$
for some $(x_1, y_1), (x_2, y_2)\in U$ and $(m, n)\in M\times N.$
Hence, we have that
$$(m_1, m_2)=(x_1, x_2)m\in\langle U^{\prime}\times U^{\prime}\rangle.$$
Similarly, we have that $N\times N$ is finitely generated.
\end{proof}

We now turn our attention to finite presentability.
Recall from Theorem \ref{Gallagher} that the diagonal $M$-act is finitely presented if $M$ is any of the monoids of
binary relations, full transformations, partial transformations and full finite-to-one transformations on an infinite set.\par
We show in what follows that the monoid property that the diagonal act is finitely presented is inherited by substructures and extensions in certain situations,
and is also preserved by direct products.

\begin{prop}
\label{daidealfp}
Let $M$ be a monoid, let $N$ be a submonoid of $M,$ and suppose that $M\!\setminus\!N$ is an ideal of $M$.
If the diagonal $M$-act is finitely presented, then the diagonal $N$-act is finitely presented.
\end{prop}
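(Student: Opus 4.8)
The plan is to transfer a finite presentation of the diagonal $M$-act to one of the diagonal $N$-act by selecting precisely those defining relations that already ``live over $N$'', and to show, using the ideal hypothesis, that no others are needed.

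The hypothesis that $M\setminus N$ is an ideal is equivalent to the following absorption property, which I would record first: if $x,y\in M$ and $xy\in N$, then both $x\in N$ and $y\in N$ (for if, say, $x\in M\setminus N$, then $xy\in(M\setminus N)M\subseteq M\setminus N$, a contradiction). Since the diagonal $M$-act is finitely presented, it is finitely generated, so by Lemma \ref{dagen} it has a generating set $U\times U$ with $U$ finite; by Proposition \ref{invariance} it is then defined by a finite presentation $\langle U\times U\,|\,R\rangle$, where I identify the free act with $F_{U\times U,M}$ and its canonical map with $(e,f)\cdot g\mapsto(eg,fg)$. Put $V=U\cap N$ and $Y=V\times V$; by Lemma \ref{daidealfg} the set $Y$ generates the diagonal $N$-act, and $F_{Y,N}$ embeds into $F_{U\times U,M}$ compatibly with the actions and the canonical maps. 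My candidate presentation is $\langle Y\,|\,R'\rangle$ with $R'=\{(p,q)\in R : p,q\in F_{Y,N}\}$, a finite set.

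I would verify the two conditions of Proposition \ref{presentationcriteria}. Condition (1) is immediate: each relation in $R'\subseteq R$ is satisfied by the diagonal $M$-act, and since its two sides lie in $F_{Y,N}$ and $N\times N$ embeds injectively into $M\times M$, it is satisfied by the diagonal $N$-act. The content is condition (2). Given $w_1,w_2\in F_{Y,N}$ with $w_1=w_2$ satisfied by the diagonal $N$-act, these are also equal in the diagonal $M$-act, so by Proposition \ref{presentationcriteria} applied to $\langle U\times U\,|\,R\rangle$ either $w_1\equiv w_2$ (trivial) or there is an $R$-sequence in $F_{U\times U,M}$ connecting them. The crux is to show such a sequence is in fact an $R'$-sequence over $N$.

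Here the main work, and the main obstacle, lies in showing the $R$-sequence never leaves $N$. Every term of the sequence has the same image in the diagonal $M$-act, namely the common image of $w_1,w_2$, which lies in $N\times N$; writing a term in its unique normal form $(e,f)\cdot g$ in the free act, the absorption property forces $e,f,g\in N$, hence $e,f\in V$ and the term lies in $F_{Y,N}$. Comparing the unique normal form of a term $p_ik_i$ with the expression $p_i=x_i\cdot a_i$ coming from the relation then forces $x_i\in Y$ and $a_ik_i\in N$, whence $a_i\in N$ and $k_i\in N$, and likewise for $q_i$; thus each relation $(p_i,q_i)$ has both sides in $F_{Y,N}$ and so lies in $\overline{R'}$, while each coefficient $k_i$ lies in $N$. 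Consequently the sequence is an $R'$-sequence connecting $w_1$ and $w_2$, so $w_1=w_2$ is a consequence of $R'$, establishing condition (2); the finiteness of $Y$ and $R'$ then yields the result. The delicate point throughout is the repeated interplay between uniqueness of normal forms in free acts and the absorption property, and without the ideal hypothesis the sequence could pass through elements outside $F_{Y,N}$, so the argument would break down.
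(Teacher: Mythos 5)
Your proposal is correct and follows essentially the same route as the paper: restrict the finite presentation $\langle U\times U\,|\,R\rangle$ of $M\times M$ to the relations whose sides lie over $V=U\cap N$, and use the ideal hypothesis to show that any $R$-sequence connecting two elements of $F_{V\times V,N}$ never leaves $N$. Your explicit use of the absorption property ($xy\in N\Rightarrow x,y\in N$) together with uniqueness of normal forms simply spells out the step the paper compresses into ``every element of $M$ appearing in this sequence must in fact belong to $N$.''
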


\begin{proof}
Let $M\times M$ be defined by a finite presentation $\langle U\times U\,|\,R\rangle$.
By Lemma \ref{daidealfg}, we have that $N\times N=\langle V\times V\rangle$ where $V=U\cap N$.
We let $Z=V\times V$ and $R^{\prime}=R\cap(F_Z\times F_Z)$, and claim that $N\times N$ is defined by the finite presentation $\langle Z\,|\,R^{\prime}\rangle.$\par
Clearly $N\times N$ satisfies the relations $R^{\prime}.$
Now let $w_1, w_2\in F_Z$ be such that $w_1=w_2$ holds in $N\times N$.
By Proposition \ref{presentationcriteria}, we just need to show that $w_1=w_2$ is a consequence of $R^{\prime}.$
Indeed, we have that $w_1=w_2$ holds in $M\times M$, 
so there exists an $R$-sequence connecting $w_1$ and $w_2$.
Since $M\!\setminus\!N$ is an ideal of $M$, every element of $M$ appearing in this sequence must in fact belong to $N,$
so $w_1=w_2$ is a consequence of $R^{\prime}$.
\end{proof}

\begin{prop}
\label{dazerofp}
Let $M$ be a monoid.  Then the diagonal $M$-act is finitely presented if and only if the diagonal $M^0$-act is finitely presented.
\end{prop}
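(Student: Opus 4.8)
The plan is to prove the two implications separately, and the backward direction is essentially free. Since $M$ is a submonoid of $M^0$ and $M^0\setminus M=\{0\}$ is an ideal of $M^0$, Proposition \ref{daidealfp} applies verbatim with $M^0$ in the role of the ambient monoid and $M$ in the role of the submonoid. Hence if the diagonal $M^0$-act is finitely presented, then so is the diagonal $M$-act, and there is nothing further to do for $(\Leftarrow)$.

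For $(\Rightarrow)$ I would start from a finite presentation $\langle U\times U\,|\,R\rangle$ of $M\times M$ whose generating set has the product form guaranteed by Lemma \ref{dagen} (such a presentation exists by Proposition \ref{invariance}). The guiding idea is that, as an $M^0$-act, $M^0\times M^0$ is the union of three subacts: $\langle U\times U\rangle=(M\times M)\cup\{(0,0)\}$, the free cyclic subact $(1,0)M^0=\{(m,0):m\in M^0\}$, and the free cyclic subact $(0,1)M^0=\{(0,m):m\in M^0\}$; these pairwise intersect in exactly the point $(0,0)$, onto which multiplication by $0$ collapses every element. This suggests taking generators $Y=(U\times U)\cup\{(1,0),(0,1)\}$ and relations
$$R'=R\cup\bigl\{(u_1,u_2)\cdot 0=(1,0)\cdot 0:(u_1,u_2)\in U\times U\bigr\}\cup\bigl\{(1,0)\cdot 0=(0,1)\cdot 0\bigr\}.$$
Both $Y$ and $R'$ are finite, and $Y$ generates $M^0\times M^0$: the block $U\times U$ recovers all of $M\times M$, the generator $(1,0)$ recovers every $(m,0)$ via $(1,0)\cdot m$ and the point $(0,0)$ via $(1,0)\cdot 0$, and $(0,1)$ recovers the $(0,m)$ symmetrically. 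I would then verify that $\langle Y\,|\,R'\rangle$ is a presentation by checking the two conditions of Proposition \ref{presentationcriteria}.

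Condition (1), that $M^0\times M^0$ satisfies $R'$, is routine: the relations of $R$ hold because they already hold in $M\times M$ and involve only coefficients from $M$, while each adjoined relation has both sides equal to $(0,0)$. The substance is condition (2): given $w_1,w_2\in F_Y$ with $w_1=w_2$ in $M^0\times M^0$, I must show $w_1=w_2$ is a consequence of $R'$. Writing each word as a single generator acted on by an element of $M^0$, the key observation is that a word represents $(0,0)$ if and only if its coefficient is $0$; in that case the adjoined relations rewrite it to $(1,0)\cdot 0$, so any two such words are connected. If instead the common value is nonzero, a short case analysis on which block it lies in shows both words come from the same block. A value in $M\times M$ forces both words to lie in $F_{U\times U}$, since $(1,0)\cdot s=(s,0)$ and $(0,1)\cdot s=(0,s)$ always have a zero coordinate; then $w_1=w_2$ already holds in $M\times M$ and is a consequence of $R$ by the original presentation. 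A value $(m,0)$ with $m\in M$ forces both words to be $(1,0)\cdot m$, so $w_1\equiv w_2$, and symmetrically for a value $(0,m)$.

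The main obstacle is precisely this case analysis in condition (2), and specifically the behaviour of the three blocks at the point $(0,0)$. One must check that no coincidences occur between distinct blocks away from $(0,0)$ — this is where it matters that the entries of $U$ lie in $M$ and so are never the adjoined zero — and that the finitely many adjoined relations genuinely suffice to connect every word representing $(0,0)$ back to the single word $(1,0)\cdot 0$. Everything else is bookkeeping, and the backward direction contributes no difficulty at all.
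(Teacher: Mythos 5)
Your proof is correct, and the forward direction takes a genuinely different (and somewhat leaner) route than the paper's. Both arguments share the same skeleton: the backward implication is exactly the paper's appeal to Proposition \ref{daidealfp}, and the forward implication builds an explicit finite presentation and splits the verification according to whether the common value of $w_1$ and $w_2$ lies in $M\times M$, on one of the two axes, or at $(0,0)$. The difference is in how the axes are generated. The paper uses the generating set $Z=\bigl((U\cup\{0\})\times(U\cup\{0\})\bigr)\setminus\{(0,0)\}$ from Lemma \ref{dazerofg}, so each axis is generated by the elements $(u,0)$ (resp.\ $(0,u)$) for $u\in U$; nontrivial coincidences $(u,0)\cdot m=(v,0)\cdot n$ then occur whenever $um=vn$ in $M$, and to make these consequences the paper must import a finite presentation $\langle U\,|\,S\rangle$ of the $M$-act $M$ (via Proposition \ref{invariance}) and adjoin the transported relation sets $S_1$ and $S_2$. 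You instead generate each axis by the single element $(1,0)$ (resp.\ $(0,1)$), which makes the subacts $\{(m,0)\}$ and $\{(0,m)\}$ free cyclic: the only word representing $(m,0)$ with $m\neq 0$ is $(1,0)\cdot m$, so the axis cases reduce to $w_1\equiv w_2$ and no analogue of $S_1,S_2$ is needed --- only the finitely many collapse relations at $0$. The two observations you flag as the substance (a word represents $(0,0)$ if and only if its coefficient is $0$; distinct blocks meet only at $(0,0)$, because the entries of $U$ lie in $M$ and hence never vanish under multiplication by a nonzero element) are exactly the points requiring verification, and both hold. What the paper's choice buys is consistency with Lemma \ref{dazerofg} and the symmetric shape of generating set used throughout Section 3; what yours buys is the elimination of the auxiliary presentation of $M$ and of the relation sets $S_1,S_2$.
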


\begin{proof}
Suppose that $M\times M$ is defined by a finite presentation $\langle U\times U\,|\,R\rangle$.
By Lemma \ref{dazerofg}, we have that $M^0\times M^0$ is generated by the set
$$Z=\bigl((U\cup\{0\})\times (U\cup\{0\})\bigr)\!\setminus\!\{0, 0\}.$$
Since $M$ is a finitely presented $M$-act and is generated by the finite set $U$, it can be defined by a finite presentation $\langle U\,|\,S\rangle$.
We define the following sets:
\begin{align*}
S_1=&\{(u, 0)\cdot m=(v, 0)\cdot n : (u\cdot m, v\cdot n)\in S\};\\
S_2=&\{(0, u)\cdot m=(0, v)\cdot n : (u\cdot m, v\cdot n)\in S\}.
\end{align*}
We shall show that $M^0\times M^0$ is defined by the finite presentation
$$P=\langle Z\,|\,R, S_1, S_2, x\cdot 0=y\cdot 0~(x, y\in Z)\rangle.$$
Let $w_1, w_2\in F_{Z, M^0}$ be such that $w_1=w_2$ holds in $M^0\times M^0$.
We need to show that $w_1=w_2$ is a consequence of the relations from $P$.
We may assume, therefore, that $w_1=w_2$ is not one of the relations $x\cdot 0=y\cdot 0~(x, y\in Z),$
so $w_1, w_2\in F_{Z, M}.$\par
If $w_1\in F_{U\times U}$, then $w_2\in F_{U\times U}$ and $w_1=w_2$ is a consequence of $R$.\par
Suppose $w_1\in F_{U\times\{0\}}$.  We must then have that $w_2\in F_{U\times\{0\}}$.
Now, $w_1\equiv(u, 0)\cdot m$ and $w_2\equiv(v, 0)\cdot n$ for some $u, v\in U$ and $m, n\in M$.
Since $u\cdot m=v\cdot n$ holds in $M$, it is a consequence of $S$;
that is, there exists an $S$-sequence connecting $u\cdot m$ and $v\cdot n$.
Therefore, we clearly have an $S_1$-sequence connecting $w_1$ and $w_2$, so $w_1=w_2$ is a consequence of $S_1$.\par
Similarly, if $w_1\in F_{\{0\}\times U}$, then $w_1=w_2$ is a consequence of $S_2$.\par 
The converse follows from Proposition \ref{daidealfp}.
\end{proof}

\begin{prop}
\label{dadpfp}
Let $M$ and $N$ be two monoids.
Then the diagonal $(M\times N)$-act is finitely presented if and only if both the diagonal $M$-act and the diagonal $N$-act are finitely presented.
\end{prop}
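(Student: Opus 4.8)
The plan is to prove both implications by promoting the generating-set arguments of Proposition~\ref{dadpfg} to the level of relations. Throughout I write a generator of the diagonal $(M\times N)$-act as $((u_1,v_1),(u_2,v_2))$ with $u_i\in U$, $v_i\in V$, and a typical element of the free act as $((u_1,v_1),(u_2,v_2))\cdot(m,n)$, which represents $((u_1m,v_1n),(u_2m,v_2n))$.

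For the ($\Leftarrow$) direction, suppose the diagonal $M$-act and diagonal $N$-act are defined by finite presentations $\langle U\times U\,|\,R\rangle$ and $\langle V\times V\,|\,S\rangle$. By Proposition~\ref{dadpfg} the diagonal $(M\times N)$-act is generated by $(U\times V)\times(U\times V)$. I would then lift the relations coordinate by coordinate: for each relation $\bigl((u_1,u_2)\cdot e,\,(u_1',u_2')\cdot e'\bigr)\in R$ and each $(v_1,v_2)\in V\times V$ put into $R'$ the relation $((u_1,v_1),(u_2,v_2))\cdot(e,1)=((u_1',v_1),(u_2',v_2))\cdot(e',1)$, and define $S'$ symmetrically, holding an $M$-generator fixed with $M$-scalar $1$. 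These sets are finite, and the diagonal $(M\times N)$-act clearly satisfies them. The claim is that $\langle(U\times V)\times(U\times V)\,|\,R',S'\rangle$ is the required presentation.

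The heart of this direction is completeness. Given $w_1=w_2$ holding in the diagonal $(M\times N)$-act, reading off coordinates shows that this is equivalent to a relation $(u_1,u_2)\cdot m=(u_1',u_2')\cdot m'$ holding in the diagonal $M$-act together with a relation holding in the diagonal $N$-act; each is a consequence of $R$, respectively $S$. I would rewrite $w_1$ into $w_2$ in two stages: first replay the $R$-sequence on the $M$-component of the word while holding the $N$-component fixed, using the factorisation $(m,n)=(e,1)(c,n)$ to peel off each application of a relation from $R'$, and then replay the $S$-sequence symmetrically via $(m,n)=(1,f)(m,d)$. The main obstacle is precisely this lifting: one must check term by term that an $R$-sequence in the diagonal $M$-act induces an $R'$-sequence in the diagonal $(M\times N)$-act, the essential point being that because the acting monoid is a direct product the $M$- and $N$-scalars can be adjusted independently.

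For the ($\Rightarrow$) direction, finite presentability gives finite generation, so by Proposition~\ref{dadpfg} both factor acts are finitely generated, say by $U\times U$ and $V\times V$; by Proposition~\ref{invariance} I may then assume the diagonal $(M\times N)$-act is defined by a finite presentation $\langle(U\times V)\times(U\times V)\,|\,T\rangle$. Let $\Pi$ send $((u_1,v_1),(u_2,v_2))\cdot(m,n)$ to $(u_1,u_2)\cdot m$ and set $R=\{(\Pi t_1,\Pi t_2):(t_1,t_2)\in T\}$, a finite relation set on $F_{U\times U}$. Satisfaction of $R$ by the diagonal $M$-act follows by reading off first coordinates. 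For completeness, given $(u_1,u_2)\cdot m=(u_1',u_2')\cdot m'$ holding in the diagonal $M$-act, fix any $\bar v\in V$; then $((u_1,\bar v),(u_2,\bar v))\cdot(m,1)=((u_1',\bar v),(u_2',\bar v))\cdot(m',1)$ holds in the diagonal $(M\times N)$-act, since the two sides agree in their $N$-coordinates and the $M$-coordinates match by hypothesis. Hence it is a consequence of $T$, and applying $\Pi$ to the resulting $T$-sequence yields an $R$-sequence connecting $(u_1,u_2)\cdot m$ and $(u_1',u_2')\cdot m'$, because $\Pi$ carries $\overline{T}$ into $\overline{R}$ and preserves equalities of free-act elements. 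Thus the diagonal $M$-act is finitely presented, and by symmetry so is the diagonal $N$-act.
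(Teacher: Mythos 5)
Your proposal is correct and follows essentially the same route as the paper: both directions use the coordinate projections, lift a relation of the diagonal $M$-act to the diagonal $(M\times N)$-act by fixing an element of $V$ for the converse of the forward direction, and prove the backward direction by replaying the $R$-sequence and the $S$-sequence in two separate stages. The only (harmless, arguably cleaner) difference is in the lifted relation sets for the ($\Leftarrow$) direction: you pair each relation of $R$ with an arbitrary generator pair from $V\times V$ and the trivial scalar $1$ in the passive coordinate, whereas the paper instead attaches the data $(v,v')\cdot n\in\overline{S}\sigma_Y$ and carries the scalar $n$ through the sequence.
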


\begin{proof}
For both the direct implication and the converse,
we may assume that $M\times M$ and $N\times N$ are generated by finite sets $U\times U$ and $V\times V$ respectively.
As in the proof of Proposition \ref{dadpfg}, the diagonal $(M\times N)$-act is generated by the set $Z=(U\times V)\times(U\times V).$
We now define the following maps:
$$\rho_M : F_{Z, M\times N}\to F_{U\times U, M}; 
\bigl((u_1, v_1), (u_2, v_2)\bigr)\cdot(m, n)\mapsto(u_1, u_2)\cdot m;$$
$$\rho_N : F_{Z, M\times N}\to F_{V\times V, N}, 
\bigl((u_1, v_1), (u_2, v_2)\bigr)\cdot(m, n)\mapsto(v_1, v_2)\cdot n.$$
($\Rightarrow$) Clearly it is enough to show that the diagonal $M$-act $M\times M$ is finitely presented.
Since the diagonal $(M\times N)$-act is finitely presented, it can be defined by a finite presentation $\langle Z\,|\, R\rangle.$
We now define a finite set
$$R_M=\{w_1\rho_M=w_2\rho_M : (w_1, w_2)\in R\},$$
and claim that $M\times M$ is defined by the finite presentation $\langle U\times U\,|R_M\rangle.$\par
Indeed, let $w_1\equiv(u_1, u_1^{\prime})\cdot m_1\in F_{U\times U, M}$ and $w_2\equiv(u_2, u_2^{\prime})\cdot m_2\in F_{U\times U, M}$ be such that $w_1=w_2$ in $M\times M$.
Choose $v\in V,$ and let $w\equiv\bigl((u_1, v), (u_1^{\prime}, v)\bigr)\cdot(m_1, 1)$ and $w^{\prime}\equiv\bigl((u_2, v), (u_2^{\prime}, v)\bigr)\cdot(m_2, 1).$
Then $w, w^{\prime}\in F_{Z, M\times N}$ and $w=w^{\prime}$ holds in the diagonal $(M\times N)$-act.
Therefore, there exists an $R$-sequence connecting $w$ and $w^{\prime}.$
Now, applying $\rho_M$ to this $R$-sequence, we obtain an $R_M$-sequence connecting $w_1$ and $w_2,$
so $w_1=w_2$ is a consequence of $R_M.$\par
($\Leftarrow$) We have that $M\times M$ and $N\times N$ are defined by some finite presentations $\langle U\times U\,|\, R\rangle$ and $\langle V\times V\,|\,S\rangle$ respectively.
For any set $W$, we define a map
$$\sigma_W : F_W\times F_W\to F_W, (w_1, w_2)\mapsto w_1.$$
We now define the following sets:
\begin{align*}
T_1&=\{\bigl((u_1, v), (u_2, v^{\prime})\bigr)\cdot(m, n)=\bigl((u_3, v), (u_4, v^{\prime})\bigr)\cdot(m^{\prime}, n) :\\
&\hspace{2em}\bigl((u_1, u_2)\cdot m, (u_3, u_4)\cdot m^{\prime}\bigr)\in R, (v, v^{\prime})\cdot n\in\overline{S}\sigma_Y\};\\
T_2&=\{\bigl((u, v_1), (u^{\prime}, v_2)\bigr)\cdot(m, n)=\bigl((u, v_3), (u^{\prime}, v_4)\bigr)\cdot(m, n^{\prime}) :\\
&\hspace{2em}\bigl((v_1, v_2)\cdot n, (v_3, v_4)\cdot n^{\prime}\bigr)\in S, (u, u^{\prime})\cdot n\in\overline{R}\sigma_X\}.
\end{align*}
We claim that the diagonal $M$-act is defined by the finite presentation $\langle Z\,|\,T_1, T_2\rangle.$
Indeed, let $w_1, w_2\in F_{Z, M\times N}$ be such that $w_1=w_2$ in the diagonal $(M\times N)$-act.
Now, $w_1\equiv\bigl((u_1, v_1), (u_1^{\prime}, v_1^{\prime})\bigr)\cdot(s, t)$ and $w_2\equiv\bigl((u_2, v_2), (u_2^{\prime}, u_2^{\prime})\bigr)\cdot(s^{\prime}, t^{\prime})$
for some $u_1, u_1^{\prime}, u_2, u_2^{\prime}\in U,$ $v_1, v_1^{\prime}, v_2, v_2^{\prime}\in V,$ $s, s^{\prime}\in M$ and $t, t^{\prime}\in N.$\par 
Since $w_1\rho_M=w_2\rho_M$ holds in $M\times M$, there exists an $R$-sequence
$$w_1\rho_M\equiv p_1s_1, q_1s_1\equiv p_2s_2, \dots, q_ks_k\equiv w_2\rho_M,$$
where $(p_i, q_i)\in\overline{R}$ and $s_i\in M$ for $1\leq i\leq k$.\par 
Also, since $w_1\rho_N=w_2\rho_N$ holds in $N\times N$, there exists an $S$-sequence
$$w_1\rho_N\equiv p_1^{\prime}t_1, q_1^{\prime}t_1\equiv p_2^{\prime}t_2, \dots, q_k^{\prime}t_k\equiv w_2\rho_N,$$
where $(p_i, q_i)\in\overline{R}$ and $s_i\in M$ for $1\leq i\leq k$.\par
For $i\in\{1, \dots, k\}$, let $p_i\equiv(x_i, y_i)\cdot m_i$ and $q_i\equiv(x_{i+1}, y_{i+1})\cdot m_i^{\prime},$
and let $p_1^{\prime}\equiv(v_1, v_1^{\prime})\cdot n.$
Note that $m_1s_1=s$, $m_i^{\prime}s_i=m_{i+1}s_{i+1},$ and $nt_1=t.$\par
Using a relation from $T_1$, we have
\begin{equation*}
\begin{split}
\bigl((x_i, v_1), (y_i, v_1^{\prime})\bigr)\cdot(m_i, n)(s_i, t_1)&=\bigl((x_{i+1}, v_1), (y_{i+1}, v_1^{\prime})\bigr)\cdot(m_i^{\prime}, n)(s_i, t_1)\\
&\equiv\bigl((x_{i+1}, v_1), (y_{i+1}, v_1^{\prime})\bigr)\cdot(m_{i+1}, n)(s_{i+1}, t_1).
\end{split}
\end{equation*}
Therefore, through successive applications of relations from $T_1,$ we obtain
$$w_1=\bigl((u_2, v_1), (u_2^{\prime}, v_1^{\prime})\bigr)\cdot(s^{\prime}, t).$$
By a similar argument, we have that
$$w_2=\bigl((u_2, v_1), (u_2^{\prime}, v_1^{\prime})\bigr)\cdot(s^{\prime}, t)$$
is a consequence of $T_2$.
Hence, $w_1=w_2$ is a consequence of $T_1$ and $T_2.$
\end{proof}

In the following, we show that there exist monoids for which the diagonal act is finitely generated but not finitely presented.
In order to do this, we first construct a new monoid $N$ from an arbitrary monoid $M$ and $M$-act $A$, 
and then consider how finite generation (resp. finite presentability) of the diagonal $N$-act relates to finite generation (resp. finite presentability) of both the diagonal $M$-act and $A$.

\begin{con}
Let $M$ be a monoid, and let $A$ be an $M$-act disjoint from $M$ with action 
$$A\times M\to A, (a, m)\mapsto a\cdot m.$$  
We define the following multiplication on the set $M\cup A$:
$$x\circ y =
  \begin{cases} 
   xy & \text{if } x, y\in M\\
   x\cdot y & \text{if } x\in A, y\in M\\
   y       & \text{if } x\in M\cup A, y\in A.
  \end{cases}$$
With this operation the set $M\cup A$ is a monoid with identity $1_M,$ and we denote it by $\mathcal{U}(M, A).$
Note that $M$ is a submonoid and $A$ is an ideal of $\mathcal{U}(M, A).$
\end{con}

\begin{lemma}
\label{confg}
Let $M$ be a monoid, let $A$ be an $M$-act disjoint from $M,$ and let $N=\mathcal{U}(M, A)$.
Then the diagonal $N$-act is finitely generated if and only if both the diagonal $M$-act and $A$ are finitely generated.
\end{lemma}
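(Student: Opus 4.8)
The plan is to prove both implications by working directly with the multiplication $\circ$ on $N=M\cup A$, leaning on two elementary facts. First, right multiplication by an element of $A$ collapses any pair to a diagonal pair: for $z\in A$ and $x\in N$ we have $x\circ z=z$, so $(x,y)\cdot z=(z,z)$. Second, right multiplication by an element of $M$ preserves the ``type'' of each coordinate: for $z\in M$, the element $x\circ z$ lies in $M$ or in $A$ according as $x$ does. These two facts control which factorisations are possible.

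For the forward direction, finite generation of the diagonal $M$-act is immediate: $M$ is a submonoid of $N$ with $N\setminus M=A$ an ideal, so Lemma \ref{daidealfg} applies verbatim (with $N$ in the role of the larger monoid). For finite generation of $A$, I would invoke Lemma \ref{dagen} to write the diagonal $N$-act as $\langle W\times W\rangle$ with $W$ finite, and set $W_A=W\cap A$. Given $a\in A$, I examine a factorisation $(1,a)=(w_1,w_2)\cdot z$. Since the first coordinate $1$ lies in $M$, the collapsing fact forces $z\in M$, and then the type-preservation fact forces $w_1\in M$ and $w_2\in A$; hence $a=w_2\cdot z$ with $w_2\in W_A$, proving $A=\langle W_A\rangle$.

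For the converse, I would use Lemma \ref{dagen} to assume the diagonal $M$-act is $\langle U\times U\rangle$ with $U$ finite, and fix a finite generating set $B$ for $A$. The key observation is a uniform decomposition: writing $P=\{1_M\}\cup B$, every $n\in N$ can be expressed as $n=p\circ e$ with $p\in P$ and $e\in M$ (take $p=1_M,\ e=n$ when $n\in M$, and $p=\beta,\ e=s$ from a factorisation $n=\beta\cdot s$ when $n\in A$). I then claim the finite set
$$G=\{(p\circ u,\,p'\circ u'):p,p'\in P,\ u,u'\in U\}\subseteq N\times N$$
generates the diagonal $N$-act. Given $(x,y)$, decompose $x=p_1\circ e_1$ and $y=p_2\circ e_2$; since $(e_1,e_2)\in M\times M=\langle U\times U\rangle$, there are $u_1,u_2\in U$ and $t\in M$ with $e_1=u_1t$ and $e_2=u_2t$. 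Associativity of $\circ$ then yields $(x,y)=(p_1\circ u_1,\,p_2\circ u_2)\cdot t$, an element of $G$ acted on by $t\in M\subseteq N$.

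The main obstacle is conceptual rather than computational, and it lies entirely in the converse: finite generation of $A$ by itself is \emph{not} enough, because reaching the pairs in $A\times A$ with distinct coordinates (and in the mixed blocks $M\times A$ and $A\times M$) requires synchronising the two coordinates at once. This is exactly where finite generation of the diagonal $M$-act enters—to replace the pair of exponents $(e_1,e_2)$ by a single diagonal generator $(u_1,u_2)$ sharing a common tail $t$. Identifying the uniform decomposition $n=p\circ e$ that treats $M$-elements and $A$-elements simultaneously, and recognising that it is the diagonal $M$-act, not $A$, that governs coordinate-synchronisation, is the crux of the argument.
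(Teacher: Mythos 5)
Your proposal is correct and follows essentially the same route as the paper: the forward direction uses Lemma \ref{daidealfg} together with a factorisation of $(1,a)$ (the paper uses $(a,1)$) exactly as in the paper, and your generating set $(P\circ U)\times(P\circ U)=(U\cup BU)\times(U\cup BU)$ coincides with the paper's $V\times V$, $V=XU\cup U$. The only difference is cosmetic: your uniform decomposition $n=p\circ e$ with $p\in\{1_M\}\cup B$, $e\in M$ replaces the paper's explicit case analysis on whether each coordinate lies in $M$ or in $A$.
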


\begin{proof}
($\Rightarrow$) Let $N\times N=\langle U\times U\rangle$ with $U$ finite.\par
Since $N\!\setminus\!M=A$ is an ideal of $N$, we have that $M\times M=\langle V\times V\rangle,$ where $V=U\cap M,$ by Lemma \ref{daidealfg}.\par
Now let $a\in A$.  We have that $(a, 1)=(u, v)n$ for some $u, v\in U$ and $n\in N,$ 
so $a=u\circ n$ and $1=v\circ n$.
Since $A$ is an ideal of $N$, we have that $n\in M,$ so $a=u\cdot n\in\langle U\cap A\rangle.$
Hence, $A$ is generated by $U\cap A$.\par
($\Leftarrow$) Let $A=\langle X\rangle$ and $M\times M=\langle U\times U\rangle$ with $X$ and $U$ finite, and assume that $1\in U$.
We claim that $N\times N$ is generated by $V\times V$ where $V=XU\cup U.$\par
Let $n_1, n_2\in N$.
If $n_1, n_2\in M$, then $(n_1, n_2)\in\langle U\times U\rangle\subseteq\langle V\times V\rangle$.\par
Assume now that $n_1\in A$, so $n_1=x_1\cdot m_1$ for some $x_1\in X$ and $m_1\in M.$
If $n_2\in M$, then $(m_1, n_2)=(u_1, u_2)m$ for some $u_1, u_2\in U$ and $m\in M,$
and hence $$(n_1, n_2)=(x_1u_1, u_2)m\in\langle XU\times U\rangle\subseteq\langle V\times V\rangle.$$
If $n_2\in A$, so $n_2=x_2\cdot m_2$ for some $x_2\in X$ and $m_2\in M.$
We have that $(m_1, m_2)=(u_1, u_2)m$ for some $u_1, u_2\in U$ and $m\in M,$
and hence $$(n_1, n_2)=(x_1u_1, x_2u_2)m\in\langle XU\times XU\rangle\subseteq\langle V\times V\rangle,$$
as required.
\end{proof}

\begin{lemma}
\label{confp}
Let $M$ be a monoid, let $A$ be an $M$-act disjoint from $M$, and let $N=\mathcal{U}(M, A)$.
If the diagonal $N$-act is finitely presented, then both the diagonal $M$-act and $A$ are finitely presented.
\end{lemma}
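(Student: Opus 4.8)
The statement has two independent conclusions, and I would establish them separately. Finite presentability of the diagonal $M$-act is essentially free: $M$ is a submonoid of $N=\mathcal{U}(M,A)$ and, by construction, $N\setminus M=A$ is an ideal of $N$. Hence, applying Proposition \ref{daidealfp} with $N$ in the role of the ambient monoid and $M$ in the role of the submonoid, finite presentability of the diagonal $N$-act forces that of the diagonal $M$-act. So the whole weight of the proof falls on showing that $A$ is a finitely presented $M$-act.

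\textbf{Set-up for the $A$-part.} By Lemma \ref{confg} I may fix finite sets with $A=\langle X\rangle$, $M\times M=\langle U\times U\rangle$ (with $1\in U$), and $N\times N=\langle V\times V\rangle$ where $V=XU\cup U$. Using Proposition \ref{invariance} and Corollary \ref{invariancecorollary}, fix a finite presentation $\langle V\times V\,|\,R\rangle$ of the diagonal $N$-act. The key structural observation is that the diagonal copy $\Delta=\{(a,a):a\in A\}$ is an $N$-subact of $N\times N$: right multiplication by $m\in M$ keeps it inside $\Delta$, while right multiplication by any $z\in A$ sends \emph{every} element of $N\times N$ to $(z,z)\in\Delta$. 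Moreover $a\mapsto(a,a)$ is an $M$-act isomorphism from $A$ onto $\Delta$ once the $N$-action is restricted to $M$, and it carries the generating set $X$ to $\{(x,x):x\in X\}\subseteq V\times V$. The plan is therefore to convert the presentation $R$ of $N\times N$ into a finite presentation of $A$ on the generators $X$, by a projection argument in the spirit of Proposition \ref{dadpfp}.

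\textbf{The mechanism.} I would define a map $\rho$ from the words of $F_{V\times V,N}$ whose value lies in $\Delta$ down to $F_{X,M}$. Such a word is $(v,v')\cdot n$ with value $(a,a)$; if $n\in M$ then necessarily $v,v'\in XU$ (since $v\circ n=a\in A$ rules out $v\in U\subseteq M$), and I set $\rho\bigl((v,v')\cdot n\bigr)=\beta(v)\cdot(\gamma(v)n)$ using a fixed factorisation $v=\beta(v)\cdot\gamma(v)$ with $\beta(v)\in X$, $\gamma(v)\in U$ (and $\beta(x)=x$, $\gamma(x)=1$ for $x\in X$, so the relevant endpoints project to themselves). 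I then take $R_A$ to be the finite image under $\rho$ of those relations of $R$ joining words of this first kind, together with the image of the finite family of \emph{collapse relations} $(v,v')\cdot x=(x,x)$, which hold in $N\times N$ and may be adjoined to $R$. Each relation in $R_A$ genuinely holds in $A$, so $A$ satisfies $R_A$. For the converse half of Proposition \ref{presentationcriteria}, given a relation $x_1\cdot m_1=x_2\cdot m_2$ true in $A$, the relation $(x_1,x_1)\cdot m_1=(x_2,x_2)\cdot m_2$ holds in $N\times N$; an $R$-sequence connecting its two sides has the feature that every intermediate term again takes its value in $\Delta$ (consecutive values agree because each applied relation holds in $N\times N$). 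Projecting this sequence by $\rho$ should yield an $R_A$-sequence connecting $x_1\cdot m_1$ and $x_2\cdot m_2$, giving the required consequence and hence the presentation $\langle X\,|\,R_A\rangle$ for $A$.

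\textbf{The main obstacle.} The difficulty is precisely the multiplications by elements of the ideal $A$ inside the $R$-sequence: a step that multiplies by some $z\in A$ collapses both coordinates to $(z,z)$ and destroys the $X$-data, so $\rho$ is not an $M$-homomorphism across such a step and the naive projection fails. The remedy is exactly the collapse relations. Writing $z=x\cdot m$ with $x\in X$, $m\in M$, any factor $(v,v')\cdot z\equiv\bigl((v,v')\cdot x\bigr)\cdot m$ can be rerouted to $(x,x)\cdot m$ by applying the relation $(v,v')\cdot x=(x,x)$ with the \emph{monoid} multiplier $m$; doing this on both sides of an offending step replaces a genuine $A$-multiplication by multiplications from $M$ only. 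After this normalisation every relation-application in the sequence uses a multiplier from $M$, and $\rho$ then behaves just like the homomorphic projection $\rho_M$ of Proposition \ref{dadpfp}, sending each step to an application of a relation of $R_A$. I expect the real work of the proof to be checking that this rerouting can be carried out coherently all along the sequence, keeping consecutive terms matched; once that is in place, the remaining verifications are routine.
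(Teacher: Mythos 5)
Your reduction of the first conclusion to Proposition \ref{daidealfp} is exactly what the paper does, and your set-up for the second (the generating sets $X$, $U\times U$, $V=XU\cup U$, and a finite presentation $\langle V\times V\,|\,R\rangle$ of $N\times N$) also matches. The gap is in the part you yourself flag as unchecked: the rerouting of steps whose multiplier lies in the ideal $A$, and it is not merely bookkeeping. A defining relation $(p,q)\in R$ may have the form $(u_1,u_2)\cdot z=(w,w')\cdot m'$ with $z\in A$, $u_1,u_2\in U$, $w,w'\in XU$ and $m'\in M$; applied with a multiplier $n\in M$ it connects a word $(u_1,u_2)\cdot zn$ (which your $\rho$ cannot see, and which your collapse relations replace by some $(x,x)\cdot m$ with $xm=zn$) to a word $(w,w')\cdot m'n$ (which you project to $\beta(w)\cdot\gamma(w)m'n$). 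After the rerouting you are left having to deduce $x\cdot m=\beta(w)\cdot\gamma(w)m'n$ from your finite set $R_A$ --- but an equality of exactly this kind, two words of $F_{X,M}$ representing the same element of $A$, is precisely what the whole argument is meant to establish, so at this point the proof is circular. Extra relations, or a new idea, are needed to close it.

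The paper sidesteps the difficulty entirely by not using the diagonal embedding $a\mapsto(a,a)$. Given $x_1\cdot m=x_2\cdot n$ in $A$, it writes $(m,n)=(u,v)s$ with $u,v\in U$ and $s\in M$, and compares the words $w\equiv(x_1u,1)\cdot s$ and $w'\equiv(x_2v,1)\cdot s$ in $F_{V\times V,N}$, whose common value is $(x_1m,s)$ --- crucially, with second coordinate $s\in M$. Since $A$ is an ideal and the second coordinate of the value stays outside $A$, every multiplier in an $R$-sequence joining $w$ to $w'$ is forced to lie in $M$ and every generator pair to lie in $XU\times U$; the first-coordinate projection $\rho:(x',u)\cdot m\mapsto\chi(x')\cdot\alpha(x')m$ is therefore defined along the entire sequence and converts it directly into an $R_X$-sequence, the endpoints being reached from $x_1\cdot m$ and $x_2\cdot n$ by single applications of the auxiliary relations $x\cdot u=\chi(xu)\cdot\alpha(xu)$. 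Replacing your target words $(x_i,x_i)\cdot m_i$ by these is the one change that makes the rest of your argument go through without any rerouting.
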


\begin{proof}
Since $N\!\setminus\!M=A$ is an ideal of $N,$ we have that the diagonal $M$-act is finitely presented by Proposition \ref{daidealfp}.\par
Since $N\times N$ is finitely generated, we have that $A$ is finitely generated by Lemma \ref{confg},
so let $A$ and $M\times M$ be generated by finite sets $X$ and $U\times U$ respectively, and assume that $1\in U.$
From the proof of Lemma \ref{confg}, we have that $N\times N=\langle V\times V\rangle$ where $V=XU\cup U.$
By Proposition \ref{invariance}, $N\times N$ can be defined by a finite presentation $\langle V\times V\,|\, R\rangle.$\par 
We now prove that $A$ is finitely presented.  
Let $X^{\prime}=XU$, and for each $x^{\prime}\in X^{\prime},$ 
choose $\chi(x^{\prime})\in X$ and $\alpha(x^{\prime})\in U$ such that $x^{\prime}=\chi(x^{\prime})\alpha(x^{\prime}).$
We have a well-defined $M$-homomorphism 
$$\rho : F_{X^{\prime}\times U}\to F_X, (x^{\prime}, u)\mapsto\chi(x^{\prime})\cdot\alpha(x^{\prime}).$$
Let $R_X=\{u\rho=v\rho: (u, v)\in R\}$.
We claim that $A$ is defined by the finite presentation 
$$P=\langle X\,|\,R_X, x\cdot u=y\cdot v~(x, y\in X, u, v\in U, xu=yv)\rangle.$$
Indeed, let $w_1, w_2\in F_{X, M}$ be such that $w_1=w_2$ holds in $A$.
Now $w_1\equiv x_1\cdot m$ and $w_2\equiv x_2\cdot n$ for some $x_1, x_2\in X$ and $m, n\in M$.
We have that $(m, n)=(u, v)s$ for some $u, v\in U$ and $s\in M$.\par
Let $w\equiv(x_1u, 1)\cdot s$ and $w^{\prime}\equiv(x_2v, 1)\cdot s$.
Then $w, w^{\prime}\in F_{V\times V, N}$ and $w=w^{\prime}$ holds in $N\times N$. 
Therefore, we have that $w=w^{\prime}$ is a consequence of $R$, so there exists an $R$-sequence
$$w\equiv p_1n_1, q_1n_1\equiv p_2n_2, \dots, q_kn_k\equiv w^{\prime},$$
where $(p_i, q_i)\in\overline{R}$ and $n_i\in N$ for $1\leq i\leq k$.
Now, since $A$ is an ideal of $N$, we must have that $p_i, q_i\in F_{X^{\prime}\times U, M}$ and $n_i\in M$.
Applying $\rho$ to this $R$-sequence, we obtain an $R_X$-sequence
$$\chi(x_1u)\cdot\alpha(x_1u)s\equiv(p_1\rho)m_1, (q_1\rho)m_1\equiv(p_2\rho)m_2, \dots, (q_k\rho)m_k\equiv\chi(x_2v)\cdot\alpha(x_2v)t.$$
Now, we obtain $\chi(x_1u)\cdot\alpha(x_1u)s$ from $w_1$ by an application of the relation $x_1\cdot u=\chi(x_1u)\cdot\alpha(x_1u)$, 
and likewise we obtain $\chi(x_2v)\cdot\alpha(x_2v)t$ from $w_2$.
Therefore, $w_1=w_2$ is a consequence of the relations from $P$.
\end{proof}

\begin{corollary}
\label{dafgnonfp}
There exist monoids $N$ for which the diagonal $N$-act is finitely generated but not finitely presented.
\end{corollary}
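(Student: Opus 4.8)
The plan is to feed well-chosen data $M$ and $A$ into the construction $\mathcal{U}(M, A)$ and read off the conclusion from Lemmas \ref{confg} and \ref{confp}. Concretely, it suffices to produce a monoid $M$ whose diagonal act is finitely generated, together with an $M$-act $A$ that is finitely generated but \emph{not} finitely presented. Setting $N = \mathcal{U}(M, A)$, Lemma \ref{confg} then guarantees that the diagonal $N$-act is finitely generated, since both the diagonal $M$-act and $A$ are; and the contrapositive of Lemma \ref{confp} forces the diagonal $N$-act to fail to be finitely presented, since $A$ is not. This yields a monoid $N$ of the required kind, so the whole argument reduces to exhibiting the pair $(M, A)$.

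For the first half of the pair I would take $M$ to be one of the transformation monoids of Theorem \ref{Gallagher} on an infinite set, for instance $M = \mathcal{T}_X$; by that theorem its diagonal act is free cyclic, hence in particular finitely generated, and nothing more is needed here. The substantive step --- and the one I expect to be the main obstacle --- is the second half: constructing a finitely generated but non-finitely-presented act $A$ over the same monoid $M$. The cleanest way to guarantee finite generation for free is to take $A$ cyclic, say $A = M/\rho$ for a right congruence $\rho$ on $M$; then $A$ is automatically finitely generated, and it fails to be finitely presented exactly when $\rho$ is not finitely generated as a congruence. So the task sharpens to finding a non-finitely-generated right congruence on a monoid with finitely generated diagonal act.

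Alternatively, one can try to import the second ingredient wholesale from \cite[Example 3.12]{Miller}, where a monoid is exhibited over which even the trivial act is finitely generated but not finitely presented; taking $A$ to be that trivial act removes all difficulty from the non-presentability side. The price is that one must then verify that the diagonal act of \emph{that} monoid is finitely generated, so that Lemma \ref{confg} still applies --- and this coordination, ensuring that a single monoid simultaneously carries a finitely generated diagonal act and a finitely generated non-finitely-presented act, is precisely where the real work lies. Once such a pair $(M, A)$ is secured, the conclusion follows immediately from the two lemmas as above.
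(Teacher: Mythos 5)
Your reduction is exactly the paper's: take $N=\mathcal{U}(M,A)$ with $M$ a monoid whose diagonal act is finitely generated and $A$ an $M$-act that is finitely generated but not finitely presented, then invoke Lemma \ref{confg} and the contrapositive of Lemma \ref{confp}. Your choice of $M=\mathcal{T}_X$ via Theorem \ref{Gallagher} also matches the paper. However, there is a genuine gap: you never actually produce the act $A$. You correctly identify that the substantive step is to exhibit a cyclic $M$-act $M/\rho$ with $\rho$ a non-finitely-generated right congruence on $\mathcal{T}_X$, and you explicitly defer it as ``the main obstacle'' --- but the corollary is not proved until that obstacle is overcome. The paper's construction is concrete: take the ideal $I=\{\alpha\in\mathcal{T}_X : |\mathrm{Im}\,\alpha|<\infty\}$, let $\rho$ be the associated Rees (right) congruence, and let $A=M/\rho$. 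Non-finite-presentability then follows because finite presentability of $A$ would force $\rho$ to be finitely generated as a right congruence (Corollary \ref{invariancecorollary}), hence $I$ to be finitely generated as a right ideal by some finite $U$; but any $\alpha\in I$ with $|\mathrm{Im}\,\alpha|>\max\{|\mathrm{Im}\,\beta|:\beta\in U\}$ cannot be written as $\beta\gamma$ with $\beta\in U$, since composition cannot increase image size. This image-size argument is the missing content of your proof.

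Your proposed fallback --- importing the trivial act from \cite[Example 3.12]{Miller} --- would in fact fail: the monoid there is the free monoid on an infinite set, and for that monoid $M\!\setminus\!\{1\}$ is a subsemigroup, so by Example 3.1 of this paper its diagonal act is not even finitely generated, and Lemma \ref{confg} cannot be applied. You were right to flag the coordination issue, but that route is a dead end rather than merely unfinished.
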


\begin{proof}
Let $M$ be the full transformation monoid on an infinite set $X$,
and consider the ideal $I=\{\alpha\in M : |\text{Im }\alpha|<\infty\}$.
Let $\rho$ be the Rees congruence on $M$ determined by the ideal $I,$ 
and let $A$ denote the cyclic $M$-act obtained by taking the quotient of the $M$-act $M$ by $\rho$ (considered as a right congruence).
Suppose that $A$ is finitely presented.  It follows from Corollary \ref{invariancecorollary} that $\rho$ is finitely generated (as a right congruence).
This implies that $I$ is generated by some finite set $U$ as a right ideal.
Now set $$n=\text{max}\{|\text{Im }\beta| : \beta\in U\},$$
and choose $\alpha\in I$ with $|\text{Im }\alpha|>n$.
We have that $\alpha=\beta\gamma$ for some $\beta\in U$ and $\gamma\in M,$ but
$$|\text{Im }\alpha|\leq|\text{Im }\beta|\leq n,$$
so we have a contradiction.  Hence, $A$ is not finitely presented.
It now follows from Theorem \ref{Gallagher}, Lemma \ref{confg} and Lemma \ref{confp} that the diagonal act of the monoid $N=\mathcal{U}(M, A)$ is finitely generated but not finitely presented.
\end{proof}

\section{\large{General direct products}\nopunct}

\noindent Let $M$ be a monoid.
For two $M$-acts $A$ and $B$, the Cartesian product $A\times B$ becomes an $M$-act by defining 
$$(a, b)m = (am, bm)$$ for all $(a, b)\in A\times B$ and $m\in M$; 
we call it the {\em direct product} of $A$ and $B$.\par
Notice that the diagonal $M$-act is the direct product of the $M$-act $M$ with itself.\par
We say that a monoid $M$ {\em preserves property $\mathcal{P}$} in direct products if it satisfies the following:
For any two $M$-acts $A$ and $B,$ the direct product $A\times B$ satisfies property $\mathcal{P}$ if and only if both $A$ and $B$ satisfy $\mathcal{P}.$\par 
In this section we shall consider the properties $\mathcal{P}$ of being finitely generated and being finitely presented.
The main aim of the section is to characterise the monoids that preserve these properties in direct products.\par
Notice that for a finite monoid $M$, the properties for $M$-acts of being finite, finitely generated and finitely presented coincide.
It follows that preservation of either finite generation or finite presentability in direct products is a finiteness condition for monoids:
\begin{lemma}
If $M$ is a finite monoid, then $M$ preserves both finite generation and finite presentability in direct products.
\end{lemma}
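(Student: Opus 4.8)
The plan is to reduce both preservation statements to a single, essentially trivial, observation about finiteness. The key point is the remark made just before the statement: over a finite monoid, the three conditions of being finite, finitely generated, and finitely presented coincide for $M$-acts. Once this is established, preserving finite generation (resp.\ finite presentability) in direct products becomes equivalent to preserving \emph{finiteness} in direct products, and the latter is immediate from the identity $|A\times B|=|A|\cdot|B|$.

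First I would pin down the coincidence of the three properties. Suppose $M$ is finite and $A$ is an $M$-act. If $A$ is finitely generated, say $A=\langle u_1,\dots,u_n\rangle$, then every element of $A$ has the form $u_im$ for some $i$ and some $m\in M$, so $|A|\le n|M|<\infty$ and $A$ is finite. Conversely, a finite act is generated by itself and so is finitely generated; moreover a finite monoid is finitely generated, so by the earlier lemma asserting that finite acts over finitely generated monoids are finitely presented, any finite $M$-act is finitely presented. Since finite presentability trivially implies finite generation, the three properties are indeed equivalent for $M$-acts over a finite $M$.

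With this in hand, let $\mathcal{P}$ denote either finite generation or finite presentability, and let $A$ and $B$ be $M$-acts. By the previous paragraph, $A\times B$ has $\mathcal{P}$ if and only if $A\times B$ is finite, and similarly for $A$ and $B$ separately; so it suffices to show that $A\times B$ is finite if and only if both $A$ and $B$ are finite. Because $M$-acts are non-empty by definition, both $A$ and $B$ are non-empty, and hence $|A\times B|=|A|\cdot|B|$ is finite precisely when both $|A|$ and $|B|$ are finite. This yields the desired equivalence for both choices of $\mathcal{P}$. I do not anticipate any genuine obstacle here; the only points requiring a little care are the implication \emph{finite $\Rightarrow$ finitely presented}, which rests on the earlier lemma, and the non-emptiness of acts, which is what guarantees that finiteness of the product forces finiteness of each factor.
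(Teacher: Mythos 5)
Your proposal is correct and follows essentially the same route as the paper: the paper derives the lemma directly from the observation, stated just before it, that over a finite monoid the properties of being finite, finitely generated, and finitely presented coincide for acts (the finite-implies-finitely-presented step resting on the earlier lemma about finite acts over finitely generated monoids). Your write-up simply makes explicit the details the paper leaves implicit, including the use of non-emptiness of acts to get that $A\times B$ is finite if and only if both $A$ and $B$ are.
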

Having dealt with the case where $M$ is a finite monoid, we may from now on assume that $M$ is infinite.
We first consider finite generation of direct products of acts.
Since $M$-acts $A$ and $B$ are homomorphic images of the direct product $A\times B,$ we have:

\begin{lemma}
\label{dpfactorsfg}
Let $M$ be a monoid, and let $A$ and $B$ be two $M$-acts.  
If $A\times B$ is finitely generated, then both $A$ and $B$ are finitely generated.
\end{lemma}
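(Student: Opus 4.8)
The plan is to exploit the two projection homomorphisms. First I would observe that the map $\pi_A : A\times B\to A$ defined by $(a, b)\mapsto a$ is a surjective $M$-homomorphism: it is clearly onto, and for any $(a, b)\in A\times B$ and $m\in M$ we have $\bigl((a, b)m\bigr)\pi_A=(am, bm)\pi_A=am=\bigl((a, b)\pi_A\bigr)m$. Symmetrically, $\pi_B : A\times B\to B$, $(a, b)\mapsto b$, is a surjective $M$-homomorphism. This is precisely the remark, made just before the statement, that $A$ and $B$ are homomorphic images of $A\times B$.

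The key step is then the standard fact that finite generation is inherited by homomorphic images. Suppose $A\times B=\langle U\rangle$ for some finite set $U$, and write $U\pi_A=\{u\pi_A : u\in U\}$, which is again finite. I claim that $A=\langle U\pi_A\rangle$. Indeed, given $a\in A$, choose any $b\in B$ (possible since $B$ is non-empty by definition of an $M$-act); then $(a, b)=um$ for some $u\in U$ and $m\in M$, and applying $\pi_A$ yields $a=(u\pi_A)m$. Hence $U\pi_A$ generates $A$, so $A$ is finitely generated, and the identical argument applied to $\pi_B$ shows that $B$ is finitely generated.

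I expect essentially no obstacle here, as the result reduces to two routine verifications: that each projection respects the $M$-action, and that the image of a finite generating set under a surjective homomorphism is again a finite generating set. The only point meriting a word of care is that one may form the pair $(a, b)$, which requires $B$ to be non-empty; this is guaranteed by the convention that $M$-acts are non-empty sets.
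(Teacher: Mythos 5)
Your proof is correct and follows exactly the route the paper intends: the paper gives no explicit proof, merely noting beforehand that $A$ and $B$ are homomorphic images of $A\times B$, and your argument is the standard verification of that remark together with the fact that a surjective $M$-homomorphism carries a finite generating set to a finite generating set.
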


\begin{remark}
The converse of Lemma \ref{dpfactorsfg} does not hold.
Indeed, we have already seen that there exist monoids $M$ for which the diagonal $M$-act is not finitely generated.
\end{remark}

The following result provides a generating set for the direct product of two acts, and this will be used repeatedly throughout the remainder of the section.

\begin{prop}
\label{dpgen}
Let $M$ be a monoid, and let the diagonal $M$-act be generated by a set of the form $U\times V$ where $U, V\subseteq M$.
Let $A$ and $B$ be $M$-acts generated by sets $X$ and $Y$ respectively.
Then $A\times B$ is generated by the set $Z=XU\times YV.$
\end{prop}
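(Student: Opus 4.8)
The plan is to prove the statement directly: I will show that every element of $A\times B$ lies in $\langle Z\rangle$, i.e.\ can be written as $z\cdot m$ for some $z\in Z$ and $m\in M$. The essential difficulty to keep in mind is that applying the generating sets of $A$ and $B$ \emph{separately} produces two independent monoid elements as multipliers, whereas a single generator of the direct product $A\times B$ acts by one common element of $M$. The role of the diagonal generating set $U\times V$ is precisely to reconcile this mismatch.

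First I would take an arbitrary $(a,b)\in A\times B$ and use the hypotheses $A=\langle X\rangle$ and $B=\langle Y\rangle$ to write $a=x\cdot s$ and $b=y\cdot t$ for some $x\in X$, $y\in Y$ and $s,t\in M$. At this stage $s$ and $t$ need not be related, so the pair $(a,b)=(x,y)(s,t)$ is not yet expressed via a single multiplier from $M$, and hence not visibly a member of $\langle Z\rangle$.

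The key step is to feed the pair $(s,t)\in M\times M$ into the assumed generating set of the diagonal act. Since $M\times M=\langle U\times V\rangle$, there exist $u\in U$, $v\in V$ and $m\in M$ with $(s,t)=(u,v)m=(um,vm)$, so that $s=um$ and $t=vm$. Substituting back and invoking the act axiom $a(mn)=(am)n$, I would compute $a=x\cdot(um)=(xu)\cdot m$ and $b=y\cdot(vm)=(yv)\cdot m$, whence $(a,b)=(xu,yv)\cdot m$ with $(xu,yv)\in XU\times YV=Z$ and $m\in M$. This exhibits $(a,b)$ as an element of $\langle Z\rangle$, completing the argument.

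I expect no serious obstacle beyond recognising that the diagonal generating set is exactly the device that synchronises the two separate multipliers into one; once $(s,t)$ is rewritten as $(u,v)m$, the remaining manipulations are routine applications of the associativity axiom for acts.
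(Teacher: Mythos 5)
Your argument is correct and is essentially identical to the paper's proof: both write $a=xs$, $b=yt$, then use the diagonal generating set to rewrite $(s,t)=(u,v)m$ and conclude $(a,b)=(xu,yv)m\in\langle Z\rangle$. No issues.
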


\begin{proof}
Let $a\in A$ and $b\in B$.  
We have that $a=xm$ for some $x\in X$ and $m\in M$, and $b=yn$ for some $y\in Y$ and $n\in M$.
Now $(m, n)=(u, v)s$ for some $(u, v)\in U\times V$ and $s\in M$.
Hence, we have that $(a, b)=(xu, yv)s\in\langle Z\rangle$.
\end{proof}

\begin{corollary}
\label{dpgencorollary}
Let $M$ be a monoid.
Then $M$ preserves finite generation in direct products if and only if the diagonal $M$-act is finitely generated.
\end{corollary}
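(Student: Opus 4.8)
The plan is to prove the two implications separately, leaning entirely on the results already established in this section together with Lemma \ref{dagen}. The biconditional reduces to routine bookkeeping once the right earlier statements are slotted into place, so I would keep the proof short and organised around those citations.

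For the forward implication, suppose $M$ preserves finite generation in direct products. The key observation is that the diagonal $M$-act is precisely the direct product of the $M$-act $M$ with itself, as noted at the start of Section 4. Since $M$ is a cyclic, and hence finitely generated, $M$-act (being generated by $\{1\}$), both factors of this direct product are finitely generated. Applying the preservation hypothesis to the pair $A=B=M$ then yields immediately that the diagonal $M$-act $M\times M$ is finitely generated.

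For the converse, suppose the diagonal $M$-act is finitely generated, and let $A$ and $B$ be arbitrary $M$-acts; I need to show $A\times B$ is finitely generated if and only if both $A$ and $B$ are. One direction is free: Lemma \ref{dpfactorsfg} gives, for any monoid whatsoever, that finite generation of $A\times B$ forces finite generation of both $A$ and $B$. For the other direction, suppose $A=\langle X\rangle$ and $B=\langle Y\rangle$ with $X$ and $Y$ finite. By Lemma \ref{dagen} the diagonal $M$-act has a generating set of the form $U\times U$ with $U\subseteq M$ finite, and Proposition \ref{dpgen} then supplies the generating set $XU\times YU$ for $A\times B$, which is finite.

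I do not expect any genuine obstacle here, since all the substantive work has been front-loaded into the earlier results. The only point deserving a moment's care is to invoke Lemma \ref{dagen} specifically, so that the diagonal generating set has the square shape $U\times U$ with $U$ finite; this guarantees that the set $XU\times YU$ delivered by Proposition \ref{dpgen} is genuinely finite and hence that $A\times B$ is finitely generated.
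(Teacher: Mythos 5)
Your proof is correct and follows exactly the route the paper intends: the corollary is stated without proof as an immediate consequence of Proposition \ref{dpgen} (via Lemma \ref{dagen} to get a finite square-shaped generating set), Lemma \ref{dpfactorsfg}, and the observation that the diagonal act is the direct product $M\times M$ of the cyclic act $M$ with itself. Nothing is missing.
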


In the following lemma we observe the close connection between the diagonal $M$-act and the direct product of two finitely generated free $M$-acts.

\begin{lemma}
Let $M$ be a monoid, and let $A$ and $B$ be free $M$-acts with finite bases.
Then $A\times B$ is isomorphic to a disjoint union of finitely many $M$-acts all of which are isomorphic to the diagonal $M$-act.
In particular, we have that $A\times B$ is finitely generated (resp. finitely presented) if and only if the diagonal $M$-act is finitely generated (resp. finitely presented).
\end{lemma}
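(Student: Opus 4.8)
The plan is to reduce the statement to the structure theorem for free acts together with a distributivity property of direct products over disjoint unions, and then to settle the ``in particular'' clause via a general principle about finite disjoint unions. First I would invoke Theorem \ref{freestructure} and the construction of $F_X$: since $A$ is free with finite basis $X$, I identify $A$ with $X\times M$, which is the disjoint union of the subacts $\{x\}\times M$ ($x\in X$), each $M$-isomorphic to $M$; similarly $B=Y\times M=\coprod_{y\in Y}(\{y\}\times M)$ with $Y$ finite.

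Next I would observe that the direct product distributes over these disjoint unions. For each pair $(x,y)\in X\times Y$, the set $D_{x,y}=(\{x\}\times M)\times(\{y\}\times M)$ is a subact of $A\times B$, and the map $\bigl((x,m),(y,n)\bigr)\mapsto(m,n)$ is an $M$-isomorphism from $D_{x,y}$ onto the diagonal $M$-act (one checks directly that it respects the action). Since the $D_{x,y}$ are pairwise disjoint and cover $A\times B$, this exhibits $A\times B$ as a disjoint union of $|X|\,|Y|$ copies of the diagonal $M$-act, proving the first assertion.

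For the ``in particular'' clause it then suffices to prove the following general fact: for $n\geq 1$, a disjoint union $C_1\sqcup\dots\sqcup C_n$ of copies $C_i\cong C$ of a fixed $M$-act $C$ is finitely generated (resp. finitely presented) if and only if $C$ is. The finite-generation equivalence is routine: a generating set of the disjoint union meets each $C_i$ in a generating set of $C_i$ (an element of $C_i$ can only be reached from a generator lying in $C_i$, since the action preserves components), and conversely the union of generating sets of the summands generates the whole. For finite presentability, the ``if'' direction is obtained by taking $n$ disjoint copies of a finite presentation $\langle X\,|\,R\rangle$ of $C$ on $n$ disjoint copies of the generators; since $F_{X_1\sqcup\dots\sqcup X_n}=\coprod_i F_{X_i}$ and each copy of $R$ lives inside its own $F_{X_i}$, the generated congruence decomposes accordingly and the quotient is $C_1\sqcup\dots\sqcup C_n$.

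The main obstacle is the converse f.p. direction. Assuming the disjoint union is finitely presented, it is finitely generated, so each $C_i$ is finitely generated; choosing a finite generating set for $C$ and transporting it to each summand, Proposition \ref{invariance} lets me take a finite presentation $\langle X_1\sqcup\dots\sqcup X_n\,|\,S\rangle$ of the disjoint union on these generators. The key point is that every defining relation in $S$ lies within a single component: a relation $u=v$ forces $u$ and $v$ to represent the same element of $C_1\sqcup\dots\sqcup C_n$, and because the generating map sends $F_{X_i}$ into $C_i$, the words $u$ and $v$ must lie in the same $F_{X_i}$. Splitting $S=S_1\sqcup\dots\sqcup S_n$ accordingly, and using that $S$-sequences stay inside a single $F_{X_i}$, one sees that $\langle X_i\,|\,S_i\rangle$ is a finite presentation of $C_i\cong C$. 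Combining these facts with the first assertion (and noting $|X|\,|Y|\geq 1$) yields the stated equivalences for $A\times B$ and the diagonal $M$-act.
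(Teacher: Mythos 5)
Your proposal is correct, and the first half follows exactly the paper's route: decompose $A$ and $B$ via Theorem \ref{freestructure} into finite disjoint unions of copies of $M$, and observe that the direct product distributes over disjoint unions, giving $A\times B\cong\bigcup_{i,j}(M_i\times N_j)$ with each summand isomorphic to the diagonal act. The only divergence is in the ``in particular'' clause: the paper simply cites Corollaries 5.4 and 5.9 of the companion paper \cite{Miller} for the fact that a finite disjoint union of $M$-acts is finitely generated (resp.\ finitely presented) if and only if each summand is, whereas you prove this from scratch --- splitting generating sets and defining relations by component and noting that $R$-sequences cannot cross components. Your self-contained argument is sound (the key observations, that the action preserves components and that a relation $u=v$ satisfied by the act forces $u$ and $v$ into the same $F_{X_i}$, are exactly what make the splitting work), and it buys independence from the external reference at the cost of a page of routine verification; the paper's version is shorter but leans on \cite{Miller}.
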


\begin{proof}
By Theorem \ref{freestructure}, we have that $A\cong\bigcup_{i=1}^nM_i$ and $B\cong\bigcup_{i=1}^mN_i$ where the $M_i, N_i$ are disjoint $M$-acts all of which are isomorphic to the $M$-act $M$.
Therefore, we have that $$A\times B\cong\bigcup_{i=1}^n\bigcup_{j=1}^m(M_i\times N_j).$$
The second statement follows from \cite[Corollary 5.4 (resp. Corollary 5.9)]{Miller}.
\end{proof}

We now turn our attention to finite presentability.\par
Unlike for finite generation, a direct product being finitely presented does not necessarily imply that the factors are finitely presented.
For example, if we take the free monoid $M$ on some infinite set and any finitely presented $M$-act $A$, 
then of course $A\times\{0\}\cong A$ is finitely presented,
but the trivial $M$-act $\{0\}$ is not finitely presented \cite[Example 3.12]{Miller}.
Another example, where the monoid is finitely generated, was provided by Mayr and Ru{\v s}kuc in \cite{Mayr},
and we give a brief outline of it below (see \cite[Example 3.1]{Mayr} for more details).

\begin{ex}\cite[Example 3.1]{Mayr}
\label{Mayrex}
Let $G$ be the free group on two generators $x$ and $y.$
Then $G$ is the semidirect product of $A=\langle y\rangle$ and a normal subgroup $B=\langle x^a\;(a\in A)\rangle$.
Consider the following right congruences on $G$:
\begin{align*}
\alpha&=\{(u, v)\in G\times G : Au=Av\};\\
\beta&=\{(u, v)\in G\times G : Bu=Bv\}.
\end{align*}
Now, the $G$-act $G$ is isomorphic to the direct product $G/\alpha\times G/\beta.$ 
However, the factor $G/\beta$ is not finitely presented.
Indeed, if it were finitely presented, then $\beta$ would be finitely generated.
But this would imply that $B$ is finitely generated, 
which is not the case since it is a normal subgroup of a free group of infinite index.
\end{ex}

\begin{remark}
Let $G$ be the free group on some infinite set $X,$ and choose $x\in X.$ 
Set $A=\langle X\!\setminus\!\{x\}\rangle$ and $B=\langle x^a\;(a\in A)\rangle,$ 
and define right congruences $\alpha$ and $\beta$ in the same way as in Example \ref{Mayrex}.
Then $G\cong G/\alpha\times G/\beta$ is finitely presented, but neither $G/\alpha$ nor $G/\beta$ are finitely presented.
\end{remark}

\begin{prob}
Does there exist a finitely generated monoid $M$ with $M$-acts $A$ and $B$ such that $A\times B$ is finitely presented but neither $A$ nor $B$ are finitely presented.
\end{prob}

In the case that the diagonal $M$-act is finitely generated, 
it is necessary that the factors of a finitely presented direct product are also finitely presented.

\begin{prop}
\label{dpfactorsfp}
Let $M$ be a monoid such that the diagonal $M$-act is finitely generated, and let $A$ and $B$ be two $M$-acts.
If $A\times B$ is finitely presented, then both $A$ and $B$ are finitely presented.
\end{prop}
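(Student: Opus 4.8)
The plan is to start from a finite presentation of $A\times B$, push it forward along the projection onto $A$ using an $M$-homomorphism of free acts, and then repair the discrepancy coming from the choice of generators by adjoining finitely many ``overlap'' relations. By symmetry it suffices to prove that $A$ is finitely presented. Since $A\times B$ is finitely presented it is finitely generated, so by Lemma~\ref{dpfactorsfg} both factors are finitely generated; write $A=\langle X\rangle$ and $B=\langle Y\rangle$ with $X,Y$ finite (and non-empty). Since the diagonal $M$-act is finitely generated, Lemma~\ref{dagen} gives it a generating set $U\times U$ with $U$ finite, and I may assume $1\in U$. By Proposition~\ref{dpgen} we have $A\times B=\langle Z\rangle$ with $Z=XU\times YU$, so by Proposition~\ref{invariance} I may fix a finite presentation $\langle Z\,|\,R\rangle$ for $A\times B$.

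For each $a\in XU$ choose $\chi(a)\in X$ and $\alpha(a)\in U$ with $a=\chi(a)\alpha(a)$ in $A$, and let $\rho_A:F_Z\to F_X$ be the $M$-homomorphism determined by $(a,b)\mapsto\chi(a)\cdot\alpha(a)$ for $(a,b)\in Z$. Checking on generators, $\rho_A$ lifts the projection $\pi_A:A\times B\to A$, in the sense that evaluating $w\rho_A$ in $A$ returns the $A$-component of the evaluation of $w$ in $A\times B$. I then define the finite relation sets
$$R_A=\{w_1\rho_A=w_2\rho_A:(w_1,w_2)\in R\},\quad R_X^{0}=\{x\cdot u=x'\cdot u':x,x'\in X,\,u,u'\in U,\,xu=x'u'\text{ in }A\},$$
and claim that $A$ is defined by $\langle X\,|\,R_A,R_X^{0}\rangle$. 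That $A$ satisfies all these relations is immediate from the lifting property of $\rho_A$ and the definition of $R_X^{0}$; the content is condition~(2) of Proposition~\ref{presentationcriteria}.

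So take $w_1\equiv x_1\cdot m_1$ and $w_2\equiv x_2\cdot m_2$ in $F_X$ with $x_1m_1=x_2m_2$ in $A$, and seek to show $w_1=w_2$ is a consequence of $R_A\cup R_X^{0}$. Here lies the main obstacle: the naive lift, pairing $x_1$ and $x_2$ with a common $y\in Y$ and acting by $m_1$ and $m_2$, does \emph{not} work, because in $A\times B$ the action on the two coordinates is coupled through the single monoid $M$, and the $B$-components $ym_1$ and $ym_2$ need not agree, so the two lifts are not equal in $A\times B$. The resolution is to force the two actions through a common element using the diagonal generating set: write $(m_1,m_2)=(u_1,u_2)s$ with $u_1,u_2\in U$ and $s\in M$, fix any $b\in YU$, and set $W_1\equiv(x_1u_1,b)\cdot s$ and $W_2\equiv(x_2u_2,b)\cdot s$ in $F_Z$. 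Now both $B$-components equal $bs$, while the $A$-components are $x_1u_1s=x_1m_1=x_2m_2=x_2u_2s$, so $W_1=W_2$ holds in $A\times B$.

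To finish, since $W_1=W_2$ in $A\times B$ it is a consequence of $R$; applying the $M$-homomorphism $\rho_A$ to a connecting $R$-sequence yields an $R_A$-sequence connecting $W_1\rho_A$ and $W_2\rho_A$. It remains to connect $w_1$ to $W_1\rho_A$ and $w_2$ to $W_2\rho_A$: the relation $x_1\cdot u_1=\chi(x_1u_1)\cdot\alpha(x_1u_1)$ belongs to $R_X^{0}$, and acting by $s$ rewrites $w_1\equiv x_1\cdot u_1 s$ into $\chi(x_1u_1)\cdot\alpha(x_1u_1)s\equiv W_1\rho_A$ by a single application of a relation, and symmetrically for $w_2$. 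Concatenating these three pieces exhibits $w_1=w_2$ as a consequence of $R_A\cup R_X^{0}$, completing the verification; the same argument with the roles of $A$ and $B$ interchanged shows $B$ is finitely presented.
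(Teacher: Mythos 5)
Your proposal is correct and follows essentially the same route as the paper: generate $A\times B$ by $Z=XU\times YU$, take a finite presentation $\langle Z\,|\,R\rangle$, push relations to $F_X$ via the homomorphism $(a,b)\mapsto\chi(a)\cdot\alpha(a)$, and use the diagonal generating set to lift a relation $x_1m_1=x_2m_2$ in $A$ to a relation $(x_1u_1,b)\cdot s=(x_2u_2,b)\cdot s$ in $A\times B$, repairing the endpoints with the finitely many relations $x\cdot u=x'\cdot u'$. The only cosmetic differences are notational (the paper calls your $R_A$ and $R_X^{0}$ by other names), so there is nothing further to add.
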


\begin{proof}
It is clearly sufficient to show that $A$ is finitely presented.\par
Let $M\times M=\langle U\times U\rangle$ for some finite subset $U$ of $M$.
We have that $A$ and $B$ are finitely generated by Lemma \ref{dpfactorsfg}, 
so let $X$ and $Y$ be finite generating sets for $A$ and $B$ respectively.
By Proposition \ref{dpgen}, $A\times B$ is generated by $Z=X^{\prime}\times Y^{\prime},$ where $X^{\prime}=XU$ and $Y^{\prime}=YU.$
Since $A\times B$ is finitely presented, it can be defined by a finite presentation $\langle Z\,|\,R\rangle$ by Proposition \ref{invariance}.\par
For each $x^{\prime}\in X^{\prime},$ choose $\chi(x^{\prime})\in X$ and $\alpha(x^{\prime})\in U$ such that $x^{\prime}=\chi(x^{\prime})\alpha(x^{\prime}).$
We have a well-defined $M$-homomorphism 
$$\rho : F_Z\to F_X, (x^{\prime}, y^{\prime})\mapsto\chi(x^{\prime})\cdot\alpha(x^{\prime}).$$
Let $R_X=\{u\rho=v\rho: (u, v)\in R\}$.
We shall show that $A$ is defined by the finite presentation 
$$P=\langle X\,|\,R_X, x\cdot u=y\cdot v~(x, y\in X, u, v\in U, xu=yv)\rangle.$$
It is clear that $A$ satisfies the relations of $P.$\par
Let $w_1, w_2\in F_X$ be such that $w_1=w_2$ holds in $A.$
Now, $w_1\equiv x_1\cdot m$ and $w_2\equiv x_2\cdot n$ for some $x_1, x_2\in X$ and $m, n\in M.$
We have that $(m, n)=(u, v)s$ for some $u, v\in U$ and $s\in M.$\par
Choose $y^{\prime}\in Y^{\prime},$ and let $w\equiv(x_1u, y^{\prime})\cdot s$ and $w^{\prime}\equiv(x_2v, y^{\prime})\cdot s.$
Then $w, w^{\prime}\in F_Z$ and $w=w^{\prime}$ holds in $A\times B.$ 
Therefore, we have that $w=w^{\prime}$ is a consequence of $R,$ so there exists an $R$-sequence
$$w\equiv p_1m_1, q_1m_1\equiv p_2m_2, \dots, q_km_k\equiv w^{\prime},$$
where $(p_i, q_i)\in\overline{R}$ and $m_i\in M$ for $1\leq i\leq k.$
Applying $\rho$ to this $R$-sequence, we obtain an $R_X$-sequence
$$\chi(x_1u)\cdot\alpha(x_1u)s\equiv(p_1\rho)m_1, (q_1\rho)m_1\equiv(p_2\rho)m_2, \dots, (q_k\rho)m_k\equiv\chi(x_2v)\cdot\alpha(x_2v)s.$$
Now, we obtain $\chi(x_1u)\cdot\alpha(x_1u)s$ from $w_1$ by an application of the relation $x_1\cdot u=\chi(x_1u)\cdot\alpha(x_1u),$ 
and likewise we obtain $\chi(x_2v)\cdot\alpha(x_2v)s$ from $w_2.$
Therefore, $w_1=w_2$ is a consequence of the relations from $P.$
\end{proof}

\begin{corollary}
Let $M$ be a monoid.  If the diagonal $M$-act is finitely generated, then the trivial $M$-act $\{0\}$ is finitely presented.
\end{corollary}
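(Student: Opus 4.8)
The plan is to obtain this as an immediate consequence of Proposition \ref{dpfactorsfp}, by choosing the two $M$-acts so that their direct product is manifestly finitely presented while one of the factors is the trivial act. The natural choice is to take $A$ to be the $M$-act $M$ and $B$ to be the trivial $M$-act $\{0\}$.

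First I would observe that the direct product $M\times\{0\}$ is isomorphic, via the first projection $(m, 0)\mapsto m$, to the $M$-act $M$: indeed, $(m, 0)n=(mn, 0)$ since $0n=0$ in the trivial act, so this projection is an $M$-isomorphism. Since $M$ is free with basis $\{1\}$, it is defined by the finite presentation with a single generator and no relations, and is therefore finitely presented.

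Now, by hypothesis the diagonal $M$-act is finitely generated, so Proposition \ref{dpfactorsfp} applies to the finitely presented direct product $M\times\{0\}$. It yields that both factors $M$ and $\{0\}$ are finitely presented; in particular the trivial $M$-act $\{0\}$ is finitely presented, as required.

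There is no genuine obstacle here: the entire content of the argument is the recognition that the trivial act arises as a factor of the finitely presented act $M\cong M\times\{0\}$, after which the preceding proposition does all the work.
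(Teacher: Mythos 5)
Your argument is correct and is essentially identical to the paper's proof: the paper likewise observes that the free cyclic $M$-act $M$ is isomorphic to the direct product $\{0\}\times M$ and then invokes Proposition \ref{dpfactorsfp} to conclude that $\{0\}$ is finitely presented. The only difference is the order of the factors, which is immaterial.
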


\begin{proof}
The free cyclic $M$-act $M$ is isomorphic to the direct product $\{0\}\times M,$
so it follows from Proposition \ref{dpfactorsfp} that $\{0\}$ is finitely presented.
\end{proof}

We now turn to consider when the direct product of two finitely presented acts is finitely presented.
We have already seen that direct products do not in general inherit the property of being finitely presented from their factors: 
there exist monoids $M$ for which the diagonal $M$-act is not finitely presented.
We now present a more striking example:

\begin{ex}
\textit{There exists a monoid $M$ with a finite $M$-act $A$ such that $A$ is finitely presented but the direct product $A\times A$ is not finitely presented}.\par
Let $M$ be the monoid defined by the monoid presentation
$$\langle x_i~(i\in\mathbb{N})\,|\,x_1x_1=x_1, x_2x_1=x_1, x_1x_i=x_2, x_2x_i=x_2~(i\geq 2)\rangle.$$
Notice that the set $\{x_1, x_2\}$ is a right ideal of $M.$\par
Let $A=\{a, b\},$ and define $a1=ax_1=bx_1=a$ and $b1=ax_i=bx_i=b$ for $i\geq 2.$
One can see that this makes $A$ into an $M$-act by checking that $a(mn)=(am)n$ for all $m, n\in M.$
We claim that $A$ is defined by the finite presentation
$$\langle A\,|\,a\cdot x_1=a, a\cdot x_2=b\rangle.$$
Indeed, we have $$b\cdot x_1=(a\cdot x_2)x_1\equiv a\cdot x_1=a,$$
and in a similar way we obtain $a\cdot x_i=b$ for $i>2$ and $b\cdot x_i=b$ for $i\geq 2.$\par
We have that $A\times A$ is defined by the presentation
$$\langle A\times A\,|\,u\cdot x_1=(a, a), u\cdot x_i=(b, b)~(u\in A\times A, i\geq 2)\rangle.$$
Suppose $A\times A$ is finitely presented.  Then it can be defined by a finite presentation
$$P=\langle A\times A\,|\,u\cdot x_1=(a, a), u\cdot x_i=(b, b)~ (u\in A\times A, 2\leq i\leq k)\rangle.$$
Let $i>k$ and consider the relation $(a, b)\cdot x_i=(b, b),$ which holds in $A\times A$.
Since there does not exist any relation of the form $w=(a, b)$ in $P,$ and $x_i$ cannot be written as $x_jm$ for some $j\in\{1, \dots, k\}$ and $m\in M,$ 
therefore no relation of $P$ can be applied to $(a, b)\cdot x_i,$
and so $(a, b)\cdot x_i=(b, b)$ cannot be deduced as a consequence of the relations of $P.$
Hence, we have a contradiction and $A\times A$ is not finitely presented.
\end{ex}

In the following, we build a presentation for a direct product $A\times B$ using presentations for $A$, $B$ and the diagonal act.\par 
So, let $M$ be a monoid, let $A$ and $B$ be two $M$-acts defined by presentations $\langle X\,|\,R\rangle$ and $\langle Y\,|\,S\rangle$ respectively,
and let the diagonal $M$-act $M\times M$ be defined by a presentation $\langle U\times V\,|\,P\rangle$, where $U, V\subseteq M$.\par
For each $(m, n)\in M\times M$, choose $\delta(m, n)=\bigl(\alpha(m, n), \beta(m, n)\bigr)\in U\times V$ and $\gamma(m, n)\in M$ such that 
$$(m, n)=\delta(m, n)\gamma(m, n).$$
The following observation will be crucial in the proof of Theorem \ref{dppres} below.
\begin{lemma}
\label{cruciallemma}
Let $m_1, m_2, n_1, n_2\in M,$ and let $u=\alpha(m_2, n_2)$ and $v=\beta(m_2, n_2).$  Then
$$(m_1m_2, n_1n_2)=(m_1u, n_1v)\gamma(m_2, n_2)=\delta(m_1u, n_1v)\big(\gamma(m_1u, n_1v)\gamma(m_2, n_2)\big).$$
\end{lemma}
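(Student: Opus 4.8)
The plan is to verify both equalities by direct computation, using only the defining property of the chosen data $\delta,\gamma$ together with the act axiom $(a,b)(cd)=\big((a,b)c\big)d$ on the diagonal $M$-act. Recall that by construction $(m,n)=\delta(m,n)\gamma(m,n)$, which read componentwise says precisely that $\alpha(m,n)\gamma(m,n)=m$ and $\beta(m,n)\gamma(m,n)=n$ for all $m,n\in M$. These two scalar identities are the only facts I will need about $\delta$ and $\gamma$.

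First I would establish the left-hand equality. Writing $g=\gamma(m_2,n_2)$, the action on the diagonal act gives
$$(m_1u,n_1v)g=(m_1ug,\,n_1vg).$$
Since $u=\alpha(m_2,n_2)$ and $v=\beta(m_2,n_2)$, the defining identities yield $ug=\alpha(m_2,n_2)\gamma(m_2,n_2)=m_2$ and $vg=\beta(m_2,n_2)\gamma(m_2,n_2)=n_2$. Substituting, and using associativity in $M$, I get $(m_1ug,n_1vg)=(m_1m_2,n_1n_2)$, which is exactly the first claimed equality.

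For the right-hand equality I would apply the definition of $\delta$ and $\gamma$ to the pair $(m_1u,n_1v)$ itself, obtaining $(m_1u,n_1v)=\delta(m_1u,n_1v)\,\gamma(m_1u,n_1v)$. Acting by $\gamma(m_2,n_2)$ on both sides and invoking the act axiom then gives
$$(m_1u,n_1v)\gamma(m_2,n_2)=\Big(\delta(m_1u,n_1v)\,\gamma(m_1u,n_1v)\Big)\gamma(m_2,n_2)=\delta(m_1u,n_1v)\big(\gamma(m_1u,n_1v)\gamma(m_2,n_2)\big),$$
as required. Combining the two computations finishes the proof.

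There is essentially no genuine obstacle here: the statement is a bookkeeping identity whose content is that the chosen representatives from the generating set $U\times V$ behave coherently under the action. The only point demanding care is keeping the substitution $ug=m_2$, $vg=n_2$ straight and remembering that $(m,n)=\delta(m,n)\gamma(m,n)$ is a componentwise statement; once that is in hand, both lines are immediate.
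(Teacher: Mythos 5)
Your proof is correct: the paper states this lemma as an unproved observation, and the direct componentwise computation you give (using $(m,n)=\delta(m,n)\gamma(m,n)$ read in each coordinate, plus the act axiom) is exactly the intended justification. Nothing is missing.
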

We now define a map 
$$\rho : F_X\times F_Y\to F_Z, (x\cdot m, y\cdot n)\mapsto\big(x\alpha(m, n), y\beta(m, n)\big)\cdot\gamma(m, n).$$
Also, given any set $W$, we define a map
$$\sigma_W : F_W\times F_W\to F_W, (w_1, w_2)\mapsto w_1.$$

\begin{thm}
\label{dppres}
Let $M$ be a monoid, and let $A$ and $B$ be two $M$-acts defined by presentations $\langle X\,|\,R\rangle$ and $\langle Y\,|\,S\rangle$ respectively.
Let the diagonal $M$-act be defined by a presentation $\langle U\times V\,|\,P\rangle$, where $U, V\subseteq M$,
and let $Z=XU\times YV$.
With the remaining notation as above, we define the following sets of relations:
\begin{align*}
T_1&=\{(xu, yv)\cdot m=(xu^{\prime}, yv^{\prime})\cdot n : x\in X, y\in Y, \big((u, v)\cdot m, (u^{\prime}, v^{\prime})\cdot n\big)\in P\};\\
T_2&=\{(w_1u, wv)\rho=(w_2u, wv)\rho : (w_1, w_2)\in R, w\in\overline{S}\sigma_Y, u\in U, v\in V\};\\
T_3&=\{(wu, w_1v)\rho=(wu, w_2v)\rho : w\in\overline{R}\sigma_X, (w_1, w_2)\in S, u\in U, v\in V\}.
\end{align*}
Then $A\times B$ is defined by the presentation $\langle Z\,|\,T_1, T_2, T_3\rangle$.
\end{thm}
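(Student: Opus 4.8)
The plan is to verify the two conditions of Proposition \ref{presentationcriteria}. Proposition \ref{dpgen} already gives that $Z = XU\times YV$ generates $A\times B$, so it remains to show that $A\times B$ satisfies $T_1$, $T_2$ and $T_3$, and that every relation $w_1 = w_2$ with $w_1, w_2\in F_Z$ holding in $A\times B$ is a consequence of $T_1\cup T_2\cup T_3$. For the first condition the key point is that $\rho$ preserves represented elements: from $(m,n) = \delta(m,n)\gamma(m,n)$ we get $\alpha(m,n)\gamma(m,n) = m$ and $\beta(m,n)\gamma(m,n) = n$, so $(x\cdot m, y\cdot n)\rho$ represents $(xm, yn)$ in $A\times B$. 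Granting this, satisfaction of $T_1$ follows from $P$ holding in the diagonal act, of $T_2$ from $R$ holding in $A$, and of $T_3$ from $S$ holding in $B$, and all three checks are routine.

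The content lies in the second condition. I would first establish two normalisation principles for $T_1$. For fixed $x\in X$, $y\in Y$ the assignment $(u,v)\mapsto(xu, yv)$ carries any $P$-sequence to a $T_1$-sequence, so whenever $(u,v)\cdot m = (u',v')\cdot n$ holds in $M\times M$ the relation $(xu, yv)\cdot m = (xu', yv')\cdot n$ is a consequence of $T_1$. Combining this with the identities above shows, modulo $T_1$, that every $w\equiv(xa, yb)\cdot c$ in $F_Z$ is connected to its $\rho$-normal form $(x\cdot ac, y\cdot bc)\rho$, and that $\rho$ intertwines the $M$-action, i.e. $\big((x\cdot m, y\cdot n)\rho\big)d$ is $T_1$-connected to $(x\cdot md, y\cdot nd)\rho$. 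Hence, modulo $T_1$, I may treat $\rho$ as an $M$-homomorphism $F_X\times F_Y\to F_Z$, and the second condition reduces to showing that $(P, Q)\rho = (P', Q')\rho$ is a consequence of $T_1, T_2, T_3$ whenever $P = P'$ in $A$ and $Q = Q'$ in $B$.

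To prove this reduced statement I would follow the pattern of Proposition \ref{dadpfp} and rewrite the two coordinates in turn. Equality of the first coordinates in $A$ furnishes an $R$-sequence and equality of the second coordinates in $B$ furnishes an $S$-sequence. I rewrite the first coordinate step-by-step along the $R$-sequence using the relations $T_2$, keeping the second coordinate frozen at the left-hand side of the first relation of the $S$-sequence so that the requirement $w\in\overline S\sigma_Y$ is met, and symmetrically rewrite the second coordinate along the $S$-sequence using $T_3$. At each step it is Lemma \ref{cruciallemma} that allows a general trailing multiplier to be matched against the factorisations $u = \alpha(m,n)$, $v = \beta(m,n)$ that $T_2$ and $T_3$ require. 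Both words are driven to the common form whose first coordinate comes from $P'$ and whose second coordinate comes from $Q$, which yields $(P, Q)\rho = (P', Q')\rho$.

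I expect the main obstacle to be precisely the coordination forced by the restrictions $w\in\overline S\sigma_Y$ in $T_2$ and $w\in\overline R\sigma_X$ in $T_3$: to rewrite one coordinate one needs the other, spectator coordinate to be a genuine relation side. Making this work—by using an endpoint of one sequence as the frozen spectator while rewriting along the other, checking that the degenerate cases where one of the sequences is trivial still close up, and carefully tracking how the trailing multipliers are absorbed under $\rho$ via Lemma \ref{cruciallemma}—is the delicate heart of the argument.
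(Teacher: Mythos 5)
Your proposal follows essentially the same route as the paper's proof: generation via Proposition \ref{dpgen}, the claim that $P$-sequences lift to $T_1$-sequences (so that $\rho$ can be treated as an $M$-homomorphism modulo $T_1$), and the two-phase rewriting --- along the $R$-sequence via $T_2$ with the second coordinate frozen at the first term of the $S$-sequence, then along the $S$-sequence via $T_3$ --- with Lemma \ref{cruciallemma} absorbing the trailing multipliers exactly as you describe. The coordination issue you flag (the spectator must lie in $\overline{S}\sigma_Y$, resp.\ $\overline{R}\sigma_X$) is resolved in the paper by precisely your device of freezing at an endpoint of the other sequence, and the degenerate case where one of the sequences is empty is left implicit there as well.
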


\begin{proof}
By Proposition \ref{dpgen}, $A\times B$ is generated by the set $Z=XU\times YV.$\par
It is clear that $A\times B$ satisfies $T_1$.
We show that $A\times B$ satisfies $T_2$; the proof for $T_3$ is similar.\par
Let $(x\cdot m, x^{\prime}\cdot m^{\prime})\in R$, $y\cdot n\in\overline{S}\sigma_Y,$ $u\in U$ and $v\in V.$
We need to show that the relation 
$$\big(x\alpha(mu, nv), y\beta(mu, nv)\big)\cdot\gamma(mu, nv)=\big(x^{\prime}\alpha(m^{\prime}u, nv), y\beta(m^{\prime}u, nv)\big)\cdot\gamma(m^{\prime}u, nv)$$
holds in $A$.  Indeed, since $x\cdot m=x^{\prime}\cdot m^{\prime}$ holds in $A$, we have that 
\begin{align*}
\big(x\alpha(mu, nv), y\beta(mu, nv)\big)\gamma(mu, nv)&=(xmu, ynv)=(x^{\prime}m^{\prime}u, ynv)\\
&=\big(x^{\prime}\alpha(m^{\prime}u, nv), y\beta(m^{\prime}u, nv)\big)\gamma(m^{\prime}u, nv).
\end{align*}
We now make the following claim:
\begin{claim*}
Let $x\in X$ and $y\in Y,$ and let $u, u^{\prime}\in U,$ $v, v^{\prime}\in V$ and $m, n\in M$ such that $(u, v)m=(u^{\prime}, v^{\prime})n$.
Then $(xu, yv)\cdot m=(xu^{\prime}, yv^{\prime})\cdot n$ is a consequence of $T_1$.
\end{claim*}
\begin{proof}
Since $(u, v)\cdot m=(u^{\prime}, v^{\prime})\cdot n$ holds in $M\times M$, it is a consequence of $P$;
that is, there exists a $P$-sequence
\begin{equation*}
\begin{split}
&(u, v)\cdot m=\bigl((u_1, v_1)\cdot m_1\bigr)n_1, \bigl((u_2, v_2)\cdot m_1^{\prime}\bigr)n_1=\bigl((u_2, v_2)\cdot m_2\bigr)n_2, \dots,\\
&\bigl((u_{k+1}, v_{k+1})\cdot m_k^{\prime}\bigr)n_k=(u^{\prime}, v^{\prime})\cdot n,
\end{split}
\end{equation*}
where $\bigl((u_i, v_i)\cdot m_i, (u_{i+1}, v_{i+1})\cdot m_i^{\prime}\bigr)\in\overline{P}$ and $n_i\in M$ for $i\in\{1, \dots, k\}.$
Therefore, we have a $T_1$-sequence
\begin{equation*}
\begin{split}
&(xu, yv)\cdot m=\bigl((xu_1, yv_1)\cdot m_1\bigr)n_1, \bigl((xu_2, yv_2)\cdot m_1^{\prime}\bigr)n_1=\bigl((xu_2, yv_2)\cdot m_2\bigr)n_2, \dots,\\
&\bigl((xu_{k+1}, yv_{k+1})\cdot m_k^{\prime}\bigr)n_k=(xu^{\prime}, yv^{\prime})\cdot n,
\end{split}
\end{equation*}
so $(xu, yv)\cdot m=(xu^{\prime}, yv^{\prime})\cdot n$ is a consequence of $T_1$.
\end{proof}
Returning to the proof of Theorem \ref{dppres}, let $w_1, w_2 \in F_Z$ be such that $w_1=w_2$ in $A\times B$.
We have that $w_1\equiv(xu, yv)\cdot m$ and $w_2\equiv(x^{\prime}u^{\prime}, y^{\prime}v^{\prime})\cdot n$
for some $x, x^{\prime}\in X,$ $y, y^{\prime}\in Y,$ $u, u^{\prime}\in U,$ $v, v^{\prime}\in V$ and $m, n\in M$.\par
Since $x\cdot um=x^{\prime}\cdot u^{\prime}n$ holds in $A$, there exists an $R$-sequence
$$x\cdot um\equiv p_1n_1, q_1n_1\equiv p_2n_2, \dots, q_kn_k\equiv x^{\prime}\cdot u^{\prime}n,$$
where $(p_i, q_i)\in\overline{R}$ and $n_i\in M$ for $1\leq i\leq k$.\par
Since $y\cdot vm=y^{\prime}\cdot u^{\prime}n$ holds in $B$, we have an $S$-sequence
$$y\cdot vm\equiv p_1^{\prime}n_1^{\prime}, q_1^{\prime}n_1^{\prime}\equiv p_2^{\prime}n_2^{\prime}, \dots, q_l^{\prime}n_l^{\prime}\equiv y^{\prime}\cdot v^{\prime}n,$$
where $(p_i^{\prime}, q_i^{\prime})\in\overline{S}$ and $n_i^{\prime}\in M$ for $1\leq i\leq l$.\par
Since $(u, v)m=\delta(um, vm)\gamma(um, vm)$, we have that 
$$w_1=(x\cdot um, y\cdot vm)\rho\equiv(p_1n_1, p_1^{\prime}n_1^{\prime})\rho$$
is a consequence of $T_1$ by the above claim.\par
Let $p_i\equiv x_i\cdot m_i$ for each $i\in\{1, \dots, k\}$, and let $p_1^{\prime}\equiv y\cdot m^{\prime}.$
Letting $u_i=\alpha(n_i, n_1^{\prime})$ and $v_i=\beta(n_i, n_1^{\prime}),$ by Lemma \ref{cruciallemma} we have
$$(m_in_i, m^{\prime}n_1^{\prime})=\delta(m_iu_i, m^{\prime}v_i)\big(\gamma(m_iu_i, m^{\prime}v_i)\gamma(n_i, n_1^{\prime})\big).$$  
We also have the following equalities:
\begin{align*}
(p_in_i, p_1^{\prime}n_1^{\prime})\rho&\equiv\big(x_i\alpha(m_in_i, m^{\prime}n_1^{\prime}), y\beta(m_in_i, m^{\prime}n_1^{\prime})\big)\gamma(m_in_i, m^{\prime}n_1^{\prime});\\
\big((p_iu_i, p_1^{\prime}v_i)\rho\big)\gamma(n_i, n_1^{\prime})&\equiv\big(x_i\alpha(m_iu_i, m^{\prime}v_i), y\beta(m_iu_i, m^{\prime}v_i)\big)\big(\gamma(m_iu_i, m^{\prime}v_i)\gamma(n_i, n_1^{\prime})\big).
\end{align*}
It now follows from the above claim that
$$(p_in_i, p_1^{\prime}n_1^{\prime})\rho=\big((p_iu_i, p_1^{\prime}v_i)\rho\big)\gamma(n_i, n_1^{\prime})$$
is a consequence of $T_1$.  Exactly the same argument proves that
$$\big((q_iu_i, p_1^{\prime}v_i)\rho\big)\gamma(n_i, n_1)=(q_in_i, p_1^{\prime}n_1^{\prime})\rho\equiv(p_{i+1}n_{i+1}, p_1^{\prime}n_1^{\prime})\rho$$
is a consequence of $T_1$.  Also, by an application of a relation from $T_2$, we have
$$\big((p_iu_i, p_1^{\prime}v_i)\rho\big)\gamma(n_i, n_1^{\prime})=\big((q_iu_i, p_1^{\prime}v_i)\rho\big)\gamma(n_i, n_1^{\prime}).$$
Therefore, it follows that $w_1=(x^{\prime}\cdot u^{\prime}n, y\cdot vm)\rho$ is a consequence of $T_1$ and $T_2$.
By a similar argument, we have that $(x^{\prime}\cdot u^{\prime}n, y\cdot vm)\rho=w_2$ is a consequence of $T_1$ and $T_3$.
Hence, $w_1=w_2$ is a consequence of $T_1$, $T_2$ and $T_3$.
\end{proof}

\begin{corollary}
\label{dpprescorollary}
Let $M$ be a monoid.
Then $M$ preserves finite presentability in direct products if and only if the diagonal $M$-act is finitely presented.
\end{corollary}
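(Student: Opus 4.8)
The plan is to derive the equivalence by specialising results already established in this section, together with the observation that the $M$-act $M$ is free cyclic and hence finitely presented (it is defined by the finite presentation $\langle \{x\}\,|\,\emptyset\rangle$, since the congruence generated by the empty set is trivial). For the ($\Rightarrow$) direction, suppose $M$ preserves finite presentability in direct products. Since the diagonal $M$-act is by definition the direct product of the $M$-act $M$ with itself, I would take $A = B = M$: both factors are then finitely presented, so the defining property forces $M \times M$ to be finitely presented, which is exactly the desired conclusion.

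For the ($\Leftarrow$) direction, I would assume the diagonal $M$-act is finitely presented and verify the defining biconditional for an arbitrary pair $A$, $B$. A finitely presented act is in particular finitely generated, so the diagonal $M$-act is finitely generated; Proposition \ref{dpfactorsfp} then gives at once that if $A \times B$ is finitely presented then both $A$ and $B$ are. For the reverse implication, suppose $A$ and $B$ are both finitely presented. I would fix finite presentations $\langle X\,|\,R\rangle$ for $A$, $\langle Y\,|\,S\rangle$ for $B$ and $\langle U \times V\,|\,P\rangle$ for the diagonal $M$-act, with all of $X, Y, U, V, R, S, P$ finite, and feed them into Theorem \ref{dppres} to obtain the presentation $\langle Z\,|\,T_1, T_2, T_3\rangle$ of $A \times B$, where $Z = XU \times YV$.

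The only remaining point is that this presentation is finite: $Z = XU \times YV$ is finite because $X, U, Y, V$ are, and each of $T_1, T_2, T_3$ is indexed by finitely many choices drawn from $X, Y, U, V$ and the finite relation sets $R, S, P$ (together with the finite auxiliary sets $\overline{S}\sigma_Y$ and $\overline{R}\sigma_X$), so all three are finite. Hence $A \times B$ is finitely presented, which completes the verification of the internal biconditional and so the ($\Leftarrow$) direction. I do not expect a genuine obstacle here: the substantive construction lives entirely in Theorem \ref{dppres}, and the corollary is essentially an assembly of that theorem with Proposition \ref{dpfactorsfp} and the remark that $M$ is itself a finitely presented $M$-act. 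The only point requiring care is ensuring that the definition of preservation is addressed in full, namely both directions of its internal biconditional for \emph{arbitrary} $A$ and $B$, rather than merely for the special pair $A = B = M$ used in the converse.
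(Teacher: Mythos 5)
Your proposal is correct and follows exactly the paper's (very brief) argument: the forward direction by specialising to $A=B=M$, and the converse by combining Proposition \ref{dpfactorsfp} with the finiteness of the presentation $\langle Z\,|\,T_1,T_2,T_3\rangle$ from Theorem \ref{dppres}. The additional care you take in checking that $Z$, $T_1$, $T_2$, $T_3$ are finite and that both halves of the internal biconditional are verified is exactly what the paper leaves implicit.
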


\begin{proof}
The direct implication is obvious.  The converse follows from Proposition \ref{dpfactorsfp} and Theorem \ref{dppres}.
\end{proof}

\begin{remark}
Given both Corollary \ref{dpgencorollary} and Corollary \ref{dpprescorollary},
we may now observe that Corollary \ref{dafgnonfp} is equivalent to saying that there exist monoids that preserve finite generation but not finite presentability in direct products.
\end{remark}

\section{\large{Wreath products}\nopunct}

\noindent The wreath product is an important construction in many areas of algebra (see \cite{Meldrum}).
In 1988, Knauer and Mikhalev developed a wreath product construction for monoid acts \cite{Knauer1},
which we now briefly describe.\par
Let $M$ and $N$ be two monoids, let $A$ be an $M$-act, and let $B$ be an $N$-act.
We denote by $N^A$ the set of all mappings from $A$ to $N$, and we let $c_n$ denote the map in $N^A$ which maps every element of $A$ to $n$.
By defining, for each $\theta, \phi\in N^A,$ a map $\theta\phi\in N^A$ given by $a(\theta\phi)=(a\theta)(a\phi)$ for all $a\in A,$ 
the set $N^A$ forms a monoid with identity $c_1$.\par
Now, for any $m\in M$ and $\phi\in N^A,$ we define a map $^m\!\phi\in N^A$ by $a\hspace{0.1em}^m\!\phi=(am)\phi$ for all $a\in A$.
On the set $M\times N^A$, we define 
$$(m, \theta)(n, \phi)=(mn, \theta\hspace{0.15em}^m\!\phi)$$
for all $m, n\in M$ and $\theta, \phi\in N^A$.
With this operation, the set $M\times N^A$ is a monoid with identity $(1_M, c_{1_N})$;
we denote it by $\mathcal{W}(M, N|A)$ and call it the {\em wreath product of $M$ by $N$ through $A.$}\par 
Finally, we define an action of $\mathcal{W}(M, N|A)$ on the Cartesian product $A\times B$ by
$$(a, b)(m, \theta)=\bigl(am, b(a\theta)\bigr)$$
for all $(a, b)\in A\times B$ and $(m, \theta)\in \mathcal{W}(M, N|A).$
This operation turns $A\times B$ into a $\mathcal{W}(M, N|A)$-act; we denote it by $A\wr B$ (or $A_M\wr B_N$) and call it the {\em wreath product} of (the $M$-act) $A$ by (the $N$-act) $B$.\par
Necessary and sufficient conditions for a wreath product $A\wr B$ to be regular or inverse were given in \cite{Knauer1}, 
and characterisations for both torsion free and divisible wreath products of acts were provided in \cite{Knauer2}. \par
In this section we study the behaviour of the wreath product of monoid acts with regard to finite generation and finite presentability.\par
Our first result provides necessary and sufficient conditions for the wreath products of two acts to be finitely generated.

\begin{prop}
\label{wpfg}
Let $M$ and $N$ be two monoids, let $A$ be an $M$-act, and let $B$ be an $N$-act.
Then $A\wr B$ is a finitely generated $\mathcal{W}(M, N|A)$-act if and only if $A$ is a finitely generated $M$-act and $B$ is a finitely generated $N$-act.
\end{prop}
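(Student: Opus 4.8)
The plan is to prove both implications directly, exhibiting a finite generating set in one direction and extracting one in the other, exploiting the fact that the $\mathcal{W}(M, N|A)$-action $(a, b)(m, \theta) = (am, b(a\theta))$ acts through $M$ on the first coordinate and through $N$ on the second.

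For the converse direction, I would start from finite generating sets $X$ for $A$ (over $M$) and $Y$ for $B$ (over $N$), and claim that the finite set $X \times Y$ generates $A \wr B$. To verify this, given $(a, b) \in A \times B$ I would write $a = xm$ and $b = yn$ for some $x \in X$, $y \in Y$, $m \in M$ and $n \in N$, and then apply the wreath product element $(m, c_n)$ to $(x, y)$. Since $c_n$ is the constant map with value $n$, we have $xc_n = n$, and so $(x, y)(m, c_n) = (xm, y(xc_n)) = (xm, yn) = (a, b)$. This computation is the only thing to check, and it is routine.

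For the direct direction, I would suppose $A \wr B$ has a finite generating set $\{(a_1, b_1), \dots, (a_k, b_k)\}$, and show that the two projections $\{a_1, \dots, a_k\}$ and $\{b_1, \dots, b_k\}$ generate $A$ over $M$ and $B$ over $N$ respectively. To see that $A$ is finitely generated, I would fix $a \in A$, choose any $b \in B$ (which exists since $B$ is non-empty), and factor $(a, b) = (a_i, b_i)(m, \theta) = (a_i m, b_i(a_i \theta))$ for some $i$ and $(m, \theta) \in \mathcal{W}(M, N|A)$; reading off the first coordinate gives $a = a_i m \in a_i M$. Symmetrically, to see that $B$ is finitely generated, I would fix $b \in B$, choose any $a \in A$, and use the same form of factorisation to obtain $b = b_i(a_i \theta)$; since $a_i \theta \in N$, this gives $b \in b_i N$.

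I do not expect any serious obstacle here, as the statement is essentially a reflection of the coordinatewise nature of the action. The one point I would be careful to articulate is the asymmetry between the two coordinates: in the first coordinate the acting scalar is literally an element $m$ of $M$, whereas in the second coordinate the scalar is the value $a_i \theta$, which ranges over all of $N$ as $\theta$ ranges over $N^A$. This is exactly what makes both projections behave as generating sets for their respective component acts.
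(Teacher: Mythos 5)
Your proposal is correct and follows essentially the same argument as the paper: the converse uses the generating set $X\times Y$ together with the factorisation $(a,b)=(x,y)(m,c_n)$, and the forward direction projects a finite generating set of $A\wr B$ onto each coordinate, reading off $a=a_im$ and $b=b_i(a_i\theta)$. No differences worth noting.
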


\begin{proof}
($\Rightarrow$) Let $U$ be a finite generating set for $A\wr B$,
and let $X$ and $Y$ be the projections of $U$ to $A$ and $B$ respectively.  Clearly $X$ and $Y$ are finite.
For any $a\in A$ and $b\in B$, we have $(a, b)=(x, y)(m, \theta)$ for some $(x, y)\in U, (m, \theta)\in\mathcal{W}(M, N|A)$,
so $a=xm$ and $b=y(x\theta)$.  Hence, we have $A=\langle X\rangle$ and $B=\langle Y\rangle$.\par
($\Leftarrow$) Let $A$ and $B$ be generated by finite sets $X$ and $Y$ respectively.
We claim that $A\wr B$ is generated by the set $X\times Y.$
Indeed, let $(a, b)\in A\wr B$.  
Now $a=xm$ for some $x\in X$ and $m\in M,$ and $b=yn$ for some $y\in Y$ and $n\in N.$
Therefore, we have that $$(a, b)=(xm, yn)=(x, y)(m, c_n),$$  
as required.
\end{proof}

We now turn our attention to finite presentability, where the situation turns out to be considerably more complicated.
We begin by demonstrating that a necessary condition for a wreath product to be finitely presented is that both the factors are finitely presented.
This is perhaps quite surprising, given that in direct products finitely presentability is not necessarily inherited by factors (Example \ref{Mayrex}).

\begin{prop}
\label{wpfactorsfp}
Let $M$ and $N$ be two monoids, let $A$ be an $M$-act, and let $B$ be an $N$-act.
If $A\wr B$ is a finitely presented $\mathcal{W}(M, N|A)$-act, then $A$ is a finitely presented $M$-act and $B$ is a finitely presented $N$-act.
\end{prop}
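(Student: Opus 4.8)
The plan is to treat $A$ and $B$ separately, in each case extracting a finite presentation from a finite presentation of $A\wr B$ by projecting onto the relevant coordinate. Write $W=\mathcal{W}(M, N|A)$. Since $A\wr B$ is finitely presented it is finitely generated, so by Proposition \ref{wpfg} both factors are finitely generated, say $A=\langle X\rangle$ and $B=\langle Y\rangle$ with $X, Y$ finite. As in the proof of Proposition \ref{wpfg}, $X\times Y$ generates $A\wr B$, so by Proposition \ref{invariance} we may fix a finite presentation $\langle X\times Y\,|\,R\rangle$ for $A\wr B$.

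For the $M$-act $A$ I would define $\pi : F_{X\times Y, W}\to F_{X, M}$ by $(x, y)\cdot(m, \theta)\mapsto x\cdot m$; this is well defined by uniqueness of canonical forms, and a direct computation gives the equivariance $\pi\big(w\cdot(n, \psi)\big)=\pi(w)\cdot n$. If $(P, Q)\in R$ then $P=Q$ holds in $A\wr B$, so $P$ and $Q$ have the same first coordinate and hence $\pi(P)=\pi(Q)$ holds in $A$; set $R_X=\{(\pi(P), \pi(Q)) : (P, Q)\in R\}$. To verify that $\langle X\,|\,R_X\rangle$ presents $A$, take $w_1\equiv x_1\cdot m_1$ and $w_2\equiv x_2\cdot m_2$ in $F_{X, M}$ equal in $A$, fix any $y_0\in Y$, and lift to $W_i\equiv(x_i, y_0)\cdot(m_i, c_{1_N})$; these represent $(x_im_i, y_0)$ in $A\wr B$ and so are equal there. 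Applying $\pi$ to an $R$-sequence connecting $W_1$ and $W_2$, and using equivariance, yields an $R_X$-sequence connecting $w_1$ and $w_2$, so by Proposition \ref{presentationcriteria} the act $A$ is finitely presented.

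The $N$-act $B$ is the main obstacle. Unlike the projection onto $A$, which is a $W$-act epimorphism coming from the monoid homomorphism $W\to M$, there is in general no monoid homomorphism $W\to N$: the $W$-action on the $B$-coordinate is twisted by the $A$-coordinate via $(a, b)(m, \theta)=(am, b(a\theta))$. I would instead use the evaluation map $\sigma : F_{X\times Y, W}\to F_{Y, N}$, $(x, y)\cdot(m, \theta)\mapsto y\cdot(x\theta)$. This map is not $W$-equivariant, but the twisting is controlled: for $w\equiv(x, y)\cdot(m, \theta)$ one computes $\sigma\big(w\cdot(n, \psi)\big)=\sigma(w)\cdot\big((xm)\psi\big)$, where the $N$-multiplier depends on the first coordinate $xm$ of $w$. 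The crucial observation is that whenever $(P, Q)\in\overline{R}$ the relation $P=Q$ holds in $A\wr B$, so $P$ and $Q$ have equal first coordinates $xm=x'm'$; hence for any $\psi\in N^A$ the two multipliers $(xm)\psi$ and $(x'm')\psi$ coincide.

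With this in hand I set $R_Y=\{(\sigma(P), \sigma(Q)) : (P, Q)\in R\}$, which $B$ satisfies since equal elements of $A\wr B$ have equal second coordinates. Given $w_1'\equiv y_1\cdot n_1$ and $w_2'\equiv y_2\cdot n_2$ equal in $B$, I fix any $x_0\in X$ and lift to $W_i\equiv(x_0, y_i)\cdot(1_M, c_{n_i})$, which represent $(x_0, y_in_i)$ and are therefore equal in $A\wr B$; note $\sigma(W_i)\equiv y_i\cdot(x_0 c_{n_i})\equiv y_i\cdot n_i\equiv w_i'$. Applying $\sigma$ to an $R$-sequence connecting $W_1$ and $W_2$, the identical steps are preserved because $\sigma$ is a function, while at each relation-application step $P_ig_i, Q_ig_i$ (with common multiplier $g_i=(n_i^{*}, \psi_i)$) the controlled-twisting identity together with the equal-first-coordinate observation gives $\sigma(P_ig_i)=\sigma(P_i)\cdot n$ and $\sigma(Q_ig_i)=\sigma(Q_i)\cdot n$ for a single $n=(xm)\psi_i\in N$, with $(\sigma(P_i), \sigma(Q_i))\in\overline{R_Y}$. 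This turns the sequence into an $R_Y$-sequence connecting $w_1'$ and $w_2'$, so by Proposition \ref{presentationcriteria} the act $B$ is finitely presented, completing the proof.
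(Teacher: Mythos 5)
Your proposal is correct and follows essentially the same route as the paper: project a finite presentation $\langle X\times Y\,|\,R\rangle$ of $A\wr B$ onto each coordinate via the maps $(x,y)\cdot(m,\theta)\mapsto x\cdot m$ and $(x,y)\cdot(m,\theta)\mapsto y\cdot(x\theta)$ (the paper's $\rho_X$ and $\rho_Y$), using exactly the same twisted-equivariance computation and the observation that a relation's two sides have equal first coordinates, so the $N$-multipliers agree. The only cosmetic difference is that you lift $w_1,w_2$ separately for $A$ and for $B$ with a fixed second (resp.\ first) generator, whereas the paper performs a single combined lift and reads off both sequences from one $R$-sequence.
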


\begin{proof}
Since $A\wr B$ is finitely generated, we have that $A=\langle X\rangle$ and $B=\langle Y\rangle$ for some finite sets $X$ and $Y$ by Proposition \ref{wpfg}.
As in the proof of Proposition \ref{wpfg}, we have that $A\wr B=\langle U\rangle$ where $U=X\times Y$.
Since $A\wr B$ is finitely presented, we have that $A\wr B$ is defined by a finite presentation $\langle U\,|\,R\rangle$ by Proposition \ref{invariance}.\par
We denote the monoid $\mathcal{W}(M, N|A)$ by $W,$ and define the maps
$$\rho_X : F_{U, W}\to F_{X, M},\; (x, y)\cdot(m, \theta)\mapsto x\cdot m;$$ 
$$\rho_Y : F_{U, W}\to F_{Y, N},\; (x, y)\cdot(m, \theta)\mapsto y\cdot x\theta.$$
Note that in the expression $y\cdot x\theta$, the element $x$ should be interpreted as an element of $A$ rather than $F_X$.\par 
Let $R_X=\{u\rho_X=v\rho_X : (u, v)\in R\}$ and $R_Y=\{u\rho_Y=v\rho_Y : (u, v)\in R\}$.
By the definition of $A\wr B$, we have that $A$ satisfies $R_X$ and $B$ satisfies $R_Y$.
We shall show that $A$ and $B$ are defined by the finite presentations $\langle X\,|\,R_X\rangle$ and $\langle Y\,|\,R_Y\rangle$ respectively.\par
Let $u, u^{\prime}\in F_{X, M}$ be such that $u=u^{\prime}$ holds in $A$, and let $v, v^{\prime}\in F_{Y, N}$ be such that $v=v^{\prime}$ holds in B.
Now $u\equiv x\cdot m$, $u^{\prime}\equiv x^{\prime}\cdot m^{\prime}$ for some $x, x^{\prime}\in X$ and $m, m^{\prime}\in M,$ 
and $v\equiv y\cdot n$, $v^{\prime}\equiv y^{\prime}\cdot n^{\prime}$ for some $y, y^{\prime}\in Y$ and $n, n^{\prime}\in N.$\par
Let $w\equiv(x, y)\cdot(m, c_{n})$ and $w^{\prime}\equiv(x^{\prime}, y^{\prime})\cdot(m^{\prime}, c_{n^{\prime}}).$
Then $w, w^{\prime}\in F_{U, W}$ and $w=w^{\prime}$ holds in $A\wr B.$ 
Therefore, we have that $w=w^{\prime}$ is a consequence of $R,$ so there exists an $R$-sequence
\begin{equation}
w\equiv p_1(n_1, \phi_1), q_1(n_1, \phi_1)\equiv p_2(n_2, \phi_2), \dots, q_k(n_k, \phi_k)\equiv w^{\prime},
\end{equation}
where $(p_i, q_i)\in\overline{R}$ and $(n_i, \phi_i)\in W$ for $1\leq i\leq k$.\par
For each $i\in\{1, \dots, k\}$, let $p_i\equiv(x_i, y_i)\cdot(m_i, \theta_i)$ and $q_i\equiv(x_{i+1}, y_{i+1})\cdot(m_i^{\prime}, \theta_i^{\prime})$.
We then have that 
\begin{align*}
p_i(n_i, \phi_i)&\equiv(x_i, y_i)\cdot(m_in_i, \theta_i\hspace{0.1em}^{m_i}\!\phi_i),\\
q_i(n_i, \phi_i)&\equiv(x_{i+1}, y_{i+1})\cdot(m_i^{\prime}n_i, \theta_i^{\prime}\hspace{0.1em}^{m_i^{\prime}}\!\phi_i).
\end{align*}
Now we have $$\bigl(p_i(n_i, \phi_i)\bigr)\rho_X\equiv x_i\cdot m_in_i\equiv(p_i\rho_X)n_i,$$ 
and similarly $\bigl(q_i(n_i, \phi_i)\bigr)\rho_X\equiv(q_i\rho_X)n_i$.
Hence, applying $\rho_X$ to (1), we obtain an $R_X$-sequence
$$u\equiv(p_1\rho_X)n_1, (q_1\rho_X)n_1\equiv(p_2\rho_X)n_2, \dots, (q_k\rho_X)n_k\equiv u^{\prime},$$
so $u=u^{\prime}$ is a consequence of $R_X$.
We also have that $$\bigl(p_i(n_i, \phi_i)\bigr)\rho_Y\equiv y_i\cdot x_i\bigl(\theta_i\hspace{0.1em}^{m_i}\!\phi_i\bigr)\equiv y_i\cdot(x_i\theta_i)(x_im_i)\phi_i\equiv(p_i\rho_Y)(x_im_i)\phi_i,$$
and similarly $\bigl(q_i(n_i, \phi_i)\bigr)\rho_Y\equiv(q_i\rho_Y)(x_{i+1}m_i^{\prime})\phi_i$.
Since $p_i=q_i$ holds in $A\wr B$, we have that $x_im_i=x_{i+1}m_i^{\prime}$.
Hence, applying $\rho_Y$ to (1), we obtain an $R_Y$-sequence
$$v\equiv(p_1\rho_Y)(x_1m_1)\phi_1, (q_1\rho_Y)(x_1m_1)\phi_1\equiv(p_2\rho_Y)(x_2m_2)\phi_2, \dots, (q_k\rho_Y)(x_km_k)\phi_k\equiv v^{\prime},$$
so $v=v^{\prime}$ is a consequence of $R_Y$.
\end{proof}

We now provide a general presentation for the wreath products of two acts.  

\begin{thm}
\label{wppres}
Let $M$ and $N$ be two monoids.  
Let $A$ be an $M$-act defined by a presentation $\langle X\,|\,R\rangle,$
and let $B$ be an $N$-act defined by a presentation $\langle Y\,|\,S\rangle.$
We define the following sets of relations:
\begin{align*}
T_1&=\{(x, y)\cdot(1, \theta)=(x, y)\cdot(1, c_{x\theta}) : x\in X, y\in Y, \theta\in N^A\};\\
T_2&=\{(x, y)\cdot(m, c_1)=(x^{\prime}, y)\cdot(m^{\prime}, c_1) : (x\cdot m, x^{\prime}\cdot m^{\prime})\in R, y\in Y\};\\
T_3&=\{(x, y)\cdot(1, c_n)=(x, y^{\prime})\cdot(1, c_{n^{\prime}}) : x\in X, (y\cdot n, y^{\prime}\cdot n^{\prime})\in S\}.
\end{align*}
Then $A\wr B$ is defined by the presentation $\langle X\times Y\,|\,T_1, T_2, T_3\rangle$.
\end{thm}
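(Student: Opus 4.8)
The plan is to apply Proposition~\ref{presentationcriteria}. By Proposition~\ref{wpfg} the act $A\wr B$ is generated by $Z=X\times Y$, so two things remain: that $A\wr B$ satisfies $T_1,T_2,T_3$, and that every relation holding in $A\wr B$ is a consequence of them. The first is a direct computation using the action $(a,b)(m,\theta)=(am,b(a\theta))$: reading a generator $(x,y)\in X\times Y$ as the element $(x,y)\in A\times B$, one gets $(x,y)(1,\theta)=(x,y(x\theta))=(x,y)(1,c_{x\theta})$ for $T_1$; $(x,y)(m,c_1)=(xm,y)=(x',y)(m',c_1)$ using $xm=x'm'$ in $A$ for $T_2$; and $(x,y)(1,c_n)=(x,yn)=(x,y')(1,c_{n'})$ using $yn=y'n'$ in $B$ for $T_3$.

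The substance is completeness, and the enabling facts are a handful of identities in $W=\mathcal{W}(M,N\mid A)$, all immediate from the definitions: ${}^{m}c_n=c_n$ and $c_pc_q=c_{pq}$, whence $(m,\theta)=(1,\theta)(m,c_1)$, $(pq,c_n)=(p,c_1)(q,c_n)$ and $(m,c_{pq})=(1,c_p)(m,c_q)$. So let $w_1\equiv(x,y)\cdot(m,\theta)$ and $w_2\equiv(x',y')\cdot(m',\theta')$ with $w_1=w_2$ in $A\wr B$; unpacking the action, this says $xm=x'm'$ in $A$ and $y(x\theta)=y'(x'\theta')$ in $B$. Write $n=x\theta$ and $n'=x'\theta'$.

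The first move is to collapse the arbitrary maps to constants using $T_1$. Since $(m,\theta)=(1,\theta)(m,c_1)$, a single application of $T_1$ (right-multiplied by $(m,c_1)$) rewrites $w_1$ as $(x,y)\cdot(m,c_{n})$; likewise $w_2$ becomes $(x',y')\cdot(m',c_{n'})$. It then suffices to connect these two. For the $A$-coordinate I would take an $\overline{R}$-sequence witnessing $x\cdot m=x'\cdot m'$ in $A$ and push it through termwise: at each term the factorization $(pq,c_n)=(p,c_1)(q,c_n)$ exposes a factor $(\xi,y)\cdot(\mu,c_1)$ on which the corresponding $T_2$-relation acts (the generator $y$ and the trailing $c_n$ riding along as the right multiplier), yielding a $T_2$-sequence from $(x,y)\cdot(m,c_n)$ to $(x',y)\cdot(m',c_n)$. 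For the $B$-coordinate I would take an $\overline{S}$-sequence witnessing $y\cdot n=y'\cdot n'$ in $B$ and argue symmetrically, now using $(m',c_{pq})=(1,c_p)(m',c_q)$ to expose a factor $(x',\eta)\cdot(1,c_\nu)$ for each $T_3$-relation, producing a $T_3$-sequence from $(x',y)\cdot(m',c_n)$ to $(x',y')\cdot(m',c_{n'})$. Concatenating the $T_1$, $T_2$, $T_3$ and (reversed) $T_1$ steps shows $w_1=w_2$ is a consequence of $T_1,T_2,T_3$.

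The conceptual obstacle is spotting the first move: because the $B$-coordinate $y(x\theta)$ only ever samples $\theta$ at the single point $x$, the whole map $\theta$ can be replaced by the constant $c_{x\theta}$, and $T_1$ is precisely the relation that does this. Once one is inside the constant-map subworld the $M$-part and the $N$-part decouple and are governed independently by $T_2$ and $T_3$. The remaining difficulty is purely bookkeeping: verifying the three wreath-product factorizations and checking that, along each $R$- or $S$-sequence, the shared intermediate terms match so that successive applications chain correctly.
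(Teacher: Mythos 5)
Your proposal is correct and follows essentially the same route as the paper's proof: verify the relations directly, use $T_1$ to replace $\theta$ by the constant map $c_{x\theta}$, push an $R$-sequence through via $T_2$ and an $S$-sequence through via $T_3$ using exactly the factorizations $(m,\theta)=(1,\theta)(m,c_1)$, $(pq,c_n)=(p,c_1)(q,c_n)$ and $(m,c_{pq})=(1,c_p)(m,c_q)$, and finish with a reversed $T_1$ application. No gaps.
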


\begin{proof}
It is clear from the definition of $A\wr B$ that all the relations from $T_1$, $T_2$ and $T_3$ hold in $A\wr B$.\par 
We denote the monoid $\mathcal{W}(M, N|A)$ by $W$,
and let $w_1\equiv(x, y)\cdot(m, \theta)\in F_{X\times Y, W}$ and $w_2\equiv(x^{\prime}, y^{\prime})\cdot(m^{\prime}, \phi)\in F_{X\times Y, W}$
be such that $w_1=w_2$ in $A\wr B$.\par
Since $x\cdot m=x^{\prime}\cdot m^{\prime}$ holds in $A$, there exists an $R$-sequence
$$x\cdot m\equiv (x_1\cdot u_1)m_1, (x_2\cdot v_1)m_1\equiv (x_2\cdot u_2)m_2, \dots, (x_{k+1}\cdot v_k)m_k\equiv x^{\prime}\cdot m^{\prime},$$
where $(x_i\cdot u_i, x_{i+1}\cdot v_i)\in\overline{R}$ and $m_i\in M$ for $1\leq i\leq k$.\par 
Let $x\theta=n$ and $x^{\prime}\phi=n^{\prime}$.
Since $y\cdot n=y^{\prime}\cdot n^{\prime}$ holds in $B$, we have an $S$-sequence
$$y\cdot n\equiv(y_1\cdot s_1)n_1, (y_2\cdot t_1)n_1\equiv (y_2\cdot s_2)n_2, \dots, (y_{l+1}\cdot t_l)n_l\equiv y^{\prime}\cdot n^{\prime},$$
where $(y_i\cdot s_i, y_{i+1}\cdot t_i)\in\overline{S}$ and $n_i\in N$ for $1\leq i\leq l$.\par
We first apply a relation from $T_1$ to $w_1$:
$$w_1\equiv\big((x, y)\cdot(1, \theta)\big)(m, c_1)=\big((x, y)\cdot(1, c_n)\big)(m, c_1)\equiv\big((x_1, y)\cdot(u_1, c_1)\big)(m_1, c_n).$$
We then successively apply relations from $T_2$ to obtain
$$\big((x_{k+1}, y)\cdot(v_k, c_1)\big)(m_k, c_n)\equiv\big((x^{\prime}, y_1)\cdot(1, c_{s_1})\big)(m^{\prime}, c_{n_1}).$$
Then, through successive applications of relations from $T_3$, we attain
$$\big((x^{\prime}, y_{l+1})\cdot(1, c_{t_l})\big)(m^{\prime}, c_{n_l})\equiv\big((x^{\prime}, y^{\prime})\cdot(1, c_{n^{\prime}})\big)(m^{\prime}, c_1).$$
Finally, we acquire $w_2$ by an application of a relation from $T_1.$
Hence, we have that $w_1=w_2$ is a consequence of $T_1, T_2$ and $T_3.$
\end{proof}

In the following, we use Theorem \ref{wppres} to deduce some sufficient conditions for the wreath product of two finitely presented acts to be finitely presented.

\begin{defn}
Let $M$ and $N$ be two monoids, let $A$ be an $M$-act, let $a\in A$, and let $U$ be a subset of $N^A$.
For two maps $\theta$ and $\phi$ in $N^A$, we say that $\theta$ is $(U, a)$-{\em connected} to $\phi$ if there exists a sequence
$$\theta=\theta_1\psi_1, \phi_1\psi_1=\theta_2\psi_2, \dots, \phi_k\psi_k=\phi,$$
where each $\psi_i\in N^A$ and, for each $i\in\{1, \dots, k\},$ either $\theta_i\in U$ and $\phi_i=c_{a\theta_i},$ or $\phi_i\in U$ and $\theta_i=c_{a\phi_i}.$
\end{defn}

\begin{prop}
\label{wpfp}
Let $M$ and $N$ be two monoids.
Let $A$ be an $M$-act defined by a finite presentation $\langle X\,|\,R\rangle,$ and let $B$ be any finitely presented $N$-act.
Suppose there exists a finite set $U\subseteq N^A$ such that for every $\theta\in N^A$ and every $x\in X,$ 
either $\theta=c_{x\theta}$ or $\theta$ is $(U, x)$-connected to $c_{x\theta}.$
Then $A\wr B$ is finitely presented.
\end{prop}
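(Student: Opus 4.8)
The plan is to invoke the general presentation from Theorem \ref{wppres} and then cut the infinite relation set $T_1$ down to a finite subset using the hypothesis on $U$. Since $A$ is defined by the finite presentation $\langle X\,|\,R\rangle$ and $B$ is finitely presented, I would fix a finite presentation $\langle Y\,|\,S\rangle$ for $B$, so that $X$, $Y$, $R$ and $S$ are all finite. By Theorem \ref{wppres}, $A\wr B$ is defined by $\langle X\times Y\,|\,T_1, T_2, T_3\rangle$. Here $X\times Y$ is finite, and $T_2$, $T_3$ are finite since they are indexed by $R\times Y$ and $X\times S$ respectively. The only obstruction to finiteness is $T_1$, which is indexed by all of $N^A$.

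The key step is to introduce the finite subset
$$T_1^{\prime}=\{(x, y)\cdot(1, \eta)=(x, y)\cdot(1, c_{x\eta}) : x\in X, y\in Y, \eta\in U\}\subseteq T_1,$$
which is finite because $X$, $Y$ and $U$ are finite, and then to prove that every relation of $T_1$ is a consequence of $T_1^{\prime}$. Granting this claim, the proposition follows by a standard Tietze-type argument: $A\wr B$ satisfies $T_1^{\prime}\cup T_2\cup T_3$ (these relations hold in $A\wr B$), and in any $(T_1\cup T_2\cup T_3)$-sequence witnessing the defining property of $\langle X\times Y\,|\,T_1, T_2, T_3\rangle$, each single application of a relation from $T_1$ can be replaced by a block of applications of relations from $T_1^{\prime}$. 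Hence $\langle X\times Y\,|\,T_1^{\prime}, T_2, T_3\rangle$ is a finite presentation for $A\wr B$.

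The heart of the argument is the claim, and it rests on translating $(U,x)$-connectedness into a $T_1^{\prime}$-sequence. Fix $x\in X$, $y\in Y$ and $\theta\in N^A$; if $\theta=c_{x\theta}$ the relation $(x,y)\cdot(1,\theta)=(x,y)\cdot(1,c_{x\theta})$ is trivial, so I assume $\theta$ is $(U,x)$-connected to $c_{x\theta}$ via a sequence $\theta=\theta_1\psi_1,\ \phi_1\psi_1=\theta_2\psi_2,\ \dots,\ \phi_k\psi_k=c_{x\theta}$ as in the definition. The computational fact that makes everything work is that in $W=\mathcal{W}(M, N|A)$ one has $(1,\eta)(1,\psi)=(1,\eta\psi)$, since $^1\!\psi=\psi$; thus right-multiplying a relation $(x,y)\cdot(1,\eta)=(x,y)\cdot(1,c_{x\eta})$ of $T_1^{\prime}$ by the monoid element $(1,\psi_i)$ yields $(x,y)\cdot(1,\eta\psi_i)=(x,y)\cdot(1,c_{x\eta}\psi_i)$. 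For each $i$, if $\theta_i\in U$ and $\phi_i=c_{x\theta_i}$ this is precisely an application of a relation of $T_1^{\prime}$ (taking $\eta=\theta_i$) carrying $(x,y)\cdot(1,\theta_i\psi_i)$ to $(x,y)\cdot(1,\phi_i\psi_i)$, while if instead $\phi_i\in U$ and $\theta_i=c_{x\phi_i}$ the same relation read from $\overline{T_1^{\prime}}$ does the job. Reading the connecting sequence off term by term then produces the $T_1^{\prime}$-sequence
$$(x,y)\cdot(1,\theta)\equiv(x,y)\cdot(1,\theta_1\psi_1),\ (x,y)\cdot(1,\phi_1\psi_1)\equiv(x,y)\cdot(1,\theta_2\psi_2),\ \dots,\ (x,y)\cdot(1,\phi_k\psi_k)\equiv(x,y)\cdot(1,c_{x\theta}),$$
with monoid multipliers $m_i=(1,\psi_i)$, connecting $(x,y)\cdot(1,\theta)$ to $(x,y)\cdot(1,c_{x\theta})$ and thereby establishing the claim.

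I expect the main obstacle to be bookkeeping rather than anything conceptual: one must check carefully that the two cases in the definition of $(U,x)$-connected correspond to using a relation of $T_1^{\prime}$ in the forward and reverse directions, and that the products $\theta_i\psi_i$ taken in $N^A$ along the connecting sequence line up exactly with right-multiplication by $(1,\psi_i)$ in $W$. Once this translation is set up cleanly, the finiteness of $X\times Y$, $T_1^{\prime}$, $T_2$ and $T_3$ delivers the finite presentation, and the proof is complete.
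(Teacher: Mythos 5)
Your proposal is correct and follows essentially the same route as the paper: invoke Theorem \ref{wppres}, replace $T_1$ by the finite subset $T_1^{\prime}$ indexed by $U$, and convert each $(U,x)$-connecting sequence into a $T_1^{\prime}$-sequence via the identity $(1,\eta)(1,\psi)=(1,\eta\psi)$. The only cosmetic difference is that you spell out the reduction from $T_1$ to $T_1^{\prime}$ and the multiplier computation in more detail than the paper does.
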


\begin{proof}
Let $B$ be defined by a finite presentation $\langle Y\,|\,S\rangle.$
By Theorem \ref{wppres}, we have that $A\wr B$ is defined by the presentation $\langle X\times Y\,|\,T_1, T_2, T_3\rangle,$ where $T_1,$ $T_2,$ $T_3$ are as defined above.
Let $$T_1^{\prime}=\{(x, y)\cdot(1, \theta)=(x, y)\cdot(1, c_{x\theta}) : x\in X, y\in Y, \theta\in U\}\subseteq T_1.$$
We claim that $A\wr B$ is defined by the finite presentation $\langle X\times Y\,|\,T_1^{\prime}, T_2, T_3\rangle.$
Clearly it is enough to show that any relation from $T_1$ is a consequence of $T_1^{\prime}.$\par
Let $u=v$ be a relation from $T_1.$
Now $u\equiv(x, y)\cdot(1, \theta)$ and $v\equiv(x, y)\cdot(1, c_{x\theta})$ for some $x\in X$, $y\in Y$ and $\theta\in N^A.$
If $\theta=c_{x\theta}$, then $u\equiv v$, so suppose that $\theta\neq c_{x\theta}.$
Then there exists a sequence
$$\theta=\theta_1\psi_1, \phi_1\psi_1=\theta_2\psi_2, \dots, \phi_k\psi_k=c_{x\theta},$$
where each $\psi_i\in N^A$ and, for each $i\in\{1, \dots, k\}$, either $\theta_i\in U$ and $\phi_i=c_{x\theta_i}$, or $\phi_i\in U$ and $\theta_i=c_{x\phi_i}.$
Therefore, we have a $T_1^{\prime}$-sequence
\begin{align*}
&u\equiv((x, y)\cdot(1, \theta_1))(1, \psi_1), ((x, y)\cdot(1, \phi_1))(1, \psi_1)\equiv((x, y)\cdot(1, \theta_2))(1, \psi_2), \dots,\\&
((x, y)\cdot(1, \phi_k))(1, \psi_k)\equiv v,
\end{align*}
so $u=v$ is a consequence of $T_1^{\prime}.$
\end{proof}

\begin{corollary}
\label{wpfpcorollary}
Let $M$ and $N$ be two monoids, let $A$ be a finitely presented $M$-act, and let $B$ be a finitely presented $N$-act.
Suppose we have one of the following:
\begin{enumerate}
 \item $A$ is trivial;
 \item $N$ is trivial;
 \item $N$ contains a left zero;
 \item $A$ is finite and $N$ is a finitely generated monoid.
\end{enumerate}
Then $A\wr B$ is finitely presented.
\end{corollary}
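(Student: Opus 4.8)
The plan is to derive all four cases from Proposition~\ref{wpfp}. In each case $A$ is finitely presented, say by $\langle X\,|\,R\rangle$ with $X$ finite, and $B$ is finitely presented, so it suffices to produce a finite set $U\subseteq N^A$ such that for every $\theta\in N^A$ and every $x\in X$, either $\theta=c_{x\theta}$ or $\theta$ is $(U,x)$-connected to $c_{x\theta}$; Proposition~\ref{wpfp} then yields that $A\wr B$ is finitely presented. Throughout I will use that a single $(U,x)$-move takes a map written as $u\psi$ (with $u\in U$) to $c_{xu}\psi$, whose value at $a$ is $(xu)\,(a\psi)=u(x)\,(a\psi)$; thus the move replaces the factor $u(a)$ by $u(x)$ in every coordinate, and in particular leaves the value at $x$ unchanged (consistent with $\theta$ and $c_{x\theta}$ agreeing at $x$).

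Cases (1) and (2) are degenerate. If $A$ is trivial then $A$ is a single point, so every $\theta\in N^A$ is constant and hence $\theta=c_{x\theta}$; if $N$ is trivial then $N^A=\{c_1\}$ and again $\theta=c_{x\theta}$ for the unique map. In both cases the connectedness condition holds vacuously, and one may take $U=\emptyset$.

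For case (3), let $z$ be a left zero of $N$, so $zn=z$ for all $n\in N$. I would take $U=\{e_x:x\in X\}$, where $e_x\in N^A$ is defined by $e_x(x)=1$ and $e_x(a)=z$ for $a\neq x$; this is finite since $X$ is finite, even when $A$ is infinite. The key computation is that for any $\theta\in N^A$ the product $e_x\theta$ has value $\theta(x)$ at $x$ and value $z$ at every other point, and that this same map equals $e_x c_{x\theta}$. Since $c_{x e_x}=c_1$, the move $(\theta_1,\phi_1)=(c_1,e_x)$ with $\psi_1=\theta$ connects $\theta$ to $e_x\theta$, and the move $(\theta_2,\phi_2)=(e_x,c_1)$ with $\psi_2=c_{x\theta}$ connects $e_x\theta$ to $c_{x\theta}$, giving the length-two $(U,x)$-sequence $\theta,\ e_x\theta,\ c_{x\theta}$. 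In effect the left zero lets one collapse all coordinates other than $x$ simultaneously.

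Case (4) is the main obstacle, since now $N^A$ may be infinite and no coordinate can be collapsed. Here $A=\{a_1,\dots,a_d\}$ is finite and $N$ is generated by a finite set $G$, which I take to contain $1$. I would let $U$ consist of the maps $u_{j,g}$ (for $1\le j\le d$ and $g\in G$) defined by $u_{j,g}(a_j)=g$ and $u_{j,g}(a)=1$ otherwise; this set is finite. The point is that, because $u_{j,g}$ is $1$ off the single coordinate $a_j$ and $c_{x u_{j,g}}=c_1$ whenever $x\neq a_j$, a single $(U,x)$-move with such a map alters only coordinate $a_j$, left-multiplying or left-dividing its value by the generator $g$ while fixing every other coordinate, including $x$. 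Writing each value $\theta(a_j)$ and the target value $\theta(x)$ as words over $G$, one transforms coordinate $a_j$ from $\theta(a_j)$ down to $1$ and then up to $\theta(x)$ one letter at a time; handling the coordinates $a_j\neq x$ in succession connects $\theta$ to $c_{x\theta}$. The essential (but routine) point to check is that each intermediate factorisation $\eta=u_{j,g}\psi$ really exists, i.e.\ that the current value at $a_j$ has the relevant generator as a left factor at each stage, which is guaranteed by peeling off the letters of a fixed word for $\theta(a_j)$. Having exhibited a suitable finite $U$ in every case, Proposition~\ref{wpfp} applies and $A\wr B$ is finitely presented.
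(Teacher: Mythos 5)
Your proposal is correct and takes essentially the same route as the paper's proof: in each case you exhibit the same finite set $U$ (nothing for (1) and (2), the maps equal to $1$ at $x$ and to the left zero elsewhere for (3), and the single-coordinate generator maps for (4)) and verify the $(U,x)$-connectedness hypothesis of Proposition~\ref{wpfp}, with case (4) handled by the same coordinate-by-coordinate, letter-by-letter peeling and rebuilding argument. The one point you flag as "routine" --- that each factorisation $\eta=u_{j,g}\psi$ exists because one peels letters off a fixed word --- is exactly what the paper's explicit maps $\lambda_i$, $\mu_i$, $\psi$ provide, so there is no gap.
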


\begin{proof}
(1) and (2). If $A$ is trivial, then $N^A=\{c_n : n\in N\},$ and if $N$ is trivial, then $N^A=\{c_1\}.$
Therefore, in either case, we have that $\theta=c_{x\theta}$ for any $x\in X, \theta\in N^A.$\par 
(3) Suppose that $N$ contains a left zero $z,$ and let $A=\langle X\rangle$ with $X$ finite.
For each $x\in X,$ we define a map $\phi_x\in N^A$ by $x\phi_x=1$ and $a\phi_x=z$ for all $a\neq x,$ and let $U=\{\phi_x : x\in X\}.$
For any $\theta\in N^A$ and $x\in X,$ we have a sequence
$$\theta=c_1\theta,\; \phi_x\theta=\phi_x c_{x\theta},\; c_1 c_{x\theta}=c_{x\theta},$$
so $\theta$ is $(U, x)$-connected to $c_{x\theta}.$\par
(4) Suppose that $N$ is generated by a finite set $X$ and that $A$ is finite.
For $a\in A$ and $x\in X$, define a map $\theta(a, x)$ by 
$$b\theta(a, x)=\begin{cases} 
   x & \text{if }b=a\\
   1 & \text{otherwise,}
  \end{cases}$$
and let $$U=\{\theta(a, x) : a\in A, x\in X\}.$$
Note that $c_{b\theta(a, x)}=c_1$ for any $b\neq a.$\par
Now let $\theta\in N^A$ and $a\in A.$
Consider $a^{\prime}\in A.$  We define a map $\phi\in N^A$ by
$$b\phi=\begin{cases} 
   a\theta & \text{if }b=a^{\prime}\\
   b\theta & \text{otherwise.}
  \end{cases}$$
(Note that $b\phi=bc_{a\theta}$ for $b\in\{a, a^{\prime}\}.$)
We show that $\theta$ is $(U, a)$-connected to $\phi$.\par
We have that $a\theta=x_1\dots x_m$ and $a^{\prime}\theta=y_1\dots y_n$ for some $x_i, y_i\in X.$
For each $i\in\{1, \dots, n\}$, let $\theta_i=\theta(a^{\prime}, y_i),$ 
and define a map $\lambda_i\in N^A$ by
$$b\lambda_i=\begin{cases} 
   y_{i+1}\dots y_n & \text{if }b=a^{\prime}\\
   b\theta & \text{otherwise.}
  \end{cases}$$
For each $i\in\{1, \dots, m\}$, let $\phi_i=\theta(a^{\prime}, x_i),$ 
and define a map $\mu_i\in N^A$ by
$$b\mu_i=\begin{cases} 
   x_{i+1}\dots x_m & \text{if }b=a^{\prime}\\
   b\theta & \text{otherwise.}
  \end{cases}$$
Also, let $\psi$ be the map in $N^A$ given by
$$b\psi=\begin{cases} 
   1 & \text{if }b=a^{\prime}\\
   b\theta & \text{otherwise.}
  \end{cases}$$
We now have a sequence
\begin{equation*}
\begin{split}
&\theta=\theta_1\lambda_1, c_1\lambda_1=\theta_2\lambda_2, \dots, c_1\lambda_{n-1}=\theta_n\psi, c_1\psi,\\ 
&\phi_m\psi=c_1\mu_{m-1}, \phi_{m-1}\mu_{m-1}=c_1\mu_{m-2}, \dots, \phi_1\mu_1=\phi.
\end{split}
\end{equation*}
Hence, $\theta$ is $(U, a)$-connected to $\phi$.
Continuing in this fashion (and recalling that $A$ is finite),  
we deduce that $\theta$ is $(U, a)$-connected to $c_{a\theta}.$\par
Therefore, in any of the cases (1), (2), (3) and (4), we have that $A\wr B$ is finitely presented by Proposition \ref{wpfp}.
\end{proof}

\begin{remark}
\label{wpremark}
The wreath product $F_{X, M}\wr F_{Y, N}$ is defined by the presentation $\langle X\times Y\,|\,T\rangle$ where $T=T_1.$
Therefore, if $X$ and $Y$ are finite, it follows from Corollary \ref{invariancecorollary} that Proposition \ref{wpfp} provides a necessary and sufficient condition for $F_{X, M}\wr F_{Y, N}$ to be finitely presented.
\end{remark}

In the final part of this section, we exhibit a couple of examples of monoids $M$ and $N$ such that the wreath product $M_M\wr N_N$ is not finitely presented.
In the first example, $M$ is potentially finite and $N$ is non-finitely generated.
In the second example, $M$ is infinite and $N$ is potentially finite.

\begin{ex}
Let $M$ be any non-trivial monoid, and let $N$ be the monoid formed by adjoining an identity to an infinite right zero semigroup $S.$\par
Suppose $M_M\wr N_N$ is finitely presented.
By Remark \ref{wpremark}, there exists a finite set $U\subseteq N^M$ such that every $\theta\neq c_{1_M\theta}$ in $N^M$ is $(U, 1_M)$-connected to $c_{1_M\theta}.$\par
Choose $m\in M\setminus\{1_M\}$ and $s\in S$ such that $s\neq m\phi$ for any $\phi\in U$ (this is possible since $S$ is infinite and $U$ is finite).
Now choose any map $\theta\in N^M$ such that $1_M\theta=1_N$ and $m\theta=s$.  We then have a sequence
$$\theta=\theta_1\psi_1, \phi_1\psi_1=\theta_2\psi_2, \dots, \phi_k\psi_k=c_{1_N},$$
where each $\psi_i\in N^M$ and, for each $i\in\{1, \dots, k\}$, either $\theta_i\in U$ and $\phi_i=c_{1_N},$ or $\phi_i\in U$ and $\theta_i=c_{1_N}.$\par
Since $s=m\theta=(m\theta_1)(m\psi_1)$ and $m\theta_1\neq s,$ we must have that $m\psi_1=s$.
It follows that $s=(m\theta_2)(m\psi_2),$ which in turn implies that $m\psi_2=s$.
Continuing in this way, we have that $m\psi_k=s.$
But then $mc_{1_N}=(m\phi_k)s=s\neq 1_N,$ which is a contradiction.
Hence, $M_M\wr N_N$ is not finitely presented.
\end{ex}

\begin{ex}
Let $M$ be any infinite monoid, and let $N$ be a non-trivial finitely generated group with finite generating set $X.$\par
Suppose that $M_M\wr N_N$ is finitely presented.
Then there exists a finite set $U\subseteq N^M$ such that every $\theta\neq c_{1\theta}$ in $N^M$ is $(U, 1)$-connected to $c_{1\theta}.$\par
We claim that the group $N^M$ is generated by the finite set $U\cup\{c_x : x\in X\}.$
However, $N^M$ is not finitely generated, so we have a contradiction and $M_M\wr N_N$ is not finitely presented.\par 
To prove the claim, let $\theta\in N^M,$ so there exists a sequence
$$\theta=\theta_1\psi_1, \phi_1\psi_1=\theta_2\psi_2, \dots, \phi_k\psi_k=c_{1\theta},$$
where each $\psi_i\in N^M$ and, for each $i\in\{1, \dots, k\},$ either $\theta_i\in U$ and $\phi_i=c_{1\theta_i},$ or $\phi_i\in U$ and $\theta_i=c_{1\phi_i}.$
We have that $\psi_i=\phi_i^{-1}\theta_{i+1}\psi_{i+1}$ for $i\in\{1, \dots, k-1\}$ and $\psi_k=\phi_k^{-1}c_{1\theta},$
so it follows that
$$\theta=\theta_1\phi_1^{-1}\dots\theta_k\phi_k^{-1}c_{1\theta}\in\langle U\cup\{c_x : x\in X\}\rangle.$$
\end{ex}

\section*{Acknowledgments}
The author would like to thank his supervisor, Professor Nik Ru{\v s}kuc, for helpful conversations which improved the organisation of the material of this paper,
and EPSRC for financial support.

\end{document}